\def\tr{\mathop{\rm tr}\nolimits}
\def\G{\mathop{\rm G}\nolimits}
\def\ch{\mathop{\rm ch}\nolimits}
\def\diag{\mathop{\rm diag}\nolimits}
\def\etr{\mathop{\rm etr}\nolimits}
\def\re{\mathop{\rm Re}\nolimits}
\newcommand {\boldgreektext}[1] {\boldmath
             \(#1\)\unboldmath}
\newcommand {\boldgreek}[1]
             {\mbox{\boldgreektext{#1}}
            }
\renewenvironment{abstract}
                 {\vspace{6pt}
                  \begin{center}
                  \begin{minipage}{5in}
                  \centerline{\textbf{Abstract}}
                  \noindent\ignorespaces
                 }
                 {\end{minipage}\end{center}}
\newtheorem{thm}{\textbf{Theorem}}[section]
\newtheorem{cor}{\textbf{Corollary}}[section]
\newtheorem{lem}{\textbf{Lemma}}[section]
\theoremstyle{definition}
\newtheorem{defn}{\textbf{Definition}}[section]
\title{\Large \textbf{Singular matrix variate Birnbaum-Saunders distribution under elliptical models}}
\author{
  \textbf{Jos\'e A. D\'{\i}az-Garc\'{\i}a} \thanks{Corresponding author\newline
   {\bf Key words.}  Singular matrix variate distributions, singular elliptical distributions, Birnbaum-Saunders
        distribution, Hausdorff measure.\newline
    2000 Mathematical Subject Classification. 62E15; 60E05; 15A23; 15A09; 15B52}\\
  {\normalsize Universidad Aut\'onoma de Chihuahua} \\
  {\normalsize Facultad de Zootecnia y Ecolog\'{\i}a} \\
  {\normalsize Perif\'erico Francisco R. Almada Km 1, Zootecnia} \\
  {\normalsize 33820 Chihuahua, Chihuahua, M\'exico}\\
  {\normalsize E-mail: jadiaz@uach.mx}\\
  \textbf{Francisco J. Caro-Lopera}\\
  {\normalsize Departament of Basic Sciences} \\
  {\normalsize Universidad de Medell\'{\i}n} \\
  {\normalsize Medell\'{\i}n, Colombia} \\
  {\normalsize E-mail: fjcaro@udem.edu.co} \\[2ex]
}
\date{}
\begin{document}
\maketitle

\begin{abstract}
This work sets the matrix variate Birnbaum-Saunders theory in the context of singular distributions and
elliptical models. The so termed singular matrix variate generalised Birnbaum-Saunders distribution is
obtained with respect the Hausdorff measure. Several basic properties and particular cases of this
distribution are also derived.
\end{abstract}

\section{Introduction}\label{sec:1}

The univariate \emph{Birnbaum-Saunders distribution}, introduced by \citet{bs:69}, has promoted a
considerable research during the last 50 years. At first, the distribution was motivated as a lifetime
model for fatigue failure caused under cyclic loading, under the assumption that the failure is due to
the development and growth of a dominant crack. A more general derivation was also provided by
\citet{d:85} in a context of a biological model.

However, by passing the decades, the advances in more complex scenarios were so slow. In fact, the
literature of non singular matrix variate Birnbaum-Saunders is such small that in the recent review by
\citet{bk:18}, only 1 of the 281 cited papers belonged to the matrix case (\citet{cl:12}). In a
discussion about the addressed review, the number has increased to only 3 more new works, written by the
same group of authors, \citet{dgcl:18}.

Now, when the research moves to the singular case, the problems are greater, because the distributions do
not exist with respect the Lebesgue measure, see \citet{k:68}. For a unified approach, summarising a
number of singular distributions in a wider spectra, see \citet{k:68}, \citet{uh:94}, \citet{r:05},
\citet{dggjm:97,dggg:05a,dggg:05b,dggj:05,dggj:06a,dggj:06b}, \citet{z:07}, \citet{bo:08} and the
references therein.

Finally, the evolution of matrix variate distributions is usually given by exploring the Gaussian case
and then providing a generalisation under families of distributions. The generalised theory also based on
singular distribution allows the desirable unified approach for this distributional challenge.

For a contextualisation of the problem, we provide some highlights of the distribution.

The original univariate random variable Birnbaum-Saunders was obtained as a function of a normal random
variable. Furthermore, if $Z \sim \mathcal{N}(0,1)$ then $T$ is called a Birnbaum-Saunders random
variable, where
\begin{equation}\label{den}
    T = \beta\left (\frac{\alpha}{2}Z +\sqrt{\left(\frac{\alpha}{2}Z \right)^{2}+1} \right )^{2},
\end{equation}
We shall denote this fact as $T \sim \mathcal{BS}(\alpha, \beta)$, where $\alpha > 0$ is the shape
parameter, and $\beta > 0$ is both scale parameter an the median value of the distribution. Thus, the
inverse relation establishes that
 if $T \sim \mathcal{BS}(\alpha, \beta)$, then
\begin{equation}\label{ne}
    Z = \frac{1}{\alpha}\left (\sqrt{\frac{T}{\beta}} - \sqrt{\frac{\beta}{T}}\right ) \sim \mathcal{N}(0,1)
\end{equation}
\citet{dg:05,dge:06} proposed a generalisation of the Birnbaum-Saunders distribution, replacing the
normal distribution hypothesis in (\ref{ne}) by a \emph{symmetric distribution}, i.e. they assume that $Z
\sim \mathcal{E}(0,1,h)$. We recall that the density function of $Z \sim \mathcal{E}(0,1,h)$ is defined
as $f_{Z}(z) =  h(z^{2})$, for $z \in \Re$. Therefore, (\ref{den}) defines the \emph{generalised
Birnbaum-Saunders distribution}, which shall be denoted by $T \sim \mathcal{GBS}(\alpha, \beta; h)$.

From \citet{dg:05,dge:06} if $T \sim \mathcal{GBS}(\alpha, \beta,h)$, then
\begin{equation}\label{bs}
    dF_{T}(t)  =  \frac{t^{-3/2}\left(t + \beta\right)}{2\alpha\sqrt{\beta}}\
  h\left[\frac{1}{\alpha^{2}}\left(  \frac{t}{\beta }+\frac{\beta}{t}-2\right)\right] dt, \quad t > 0.
\end{equation}
Alternatively, let $V = \sqrt{T}$, with $dt = 2vdv$, then under a symmetric distribution, (\ref{ne}) can
be rewritten as
\begin{equation}\label{nem}
    Z = \frac{1}{\alpha}\left (\frac{V}{\sqrt{\beta}} - \frac{\sqrt{\beta}}{V}\right ),
\end{equation}
and its density is given by
\begin{equation}\label{bsm}
    dF_{V}(v)  =  \displaystyle \frac{\left (1 + \beta v^{-2} \right )}{\alpha \sqrt{\beta}}\
  h\left[ \frac{1}{\alpha^{2}}\left( \frac{v^{2}}{\beta }+\frac{\beta}{v^{2}}-2\right)\right]dv, \quad v > 0,
\end{equation}
which shall be termed \emph{square root generalised Birnbaum-Saunders distribution}.

The study of the Birnbaum-Saunders distribution has been very profuse in the univariate case (basic
properties and estimation), and few works exist in the extension to multivariate (vector) case under
elliptical models. As we quoted \citet{bk:18} make a detailed compilation of this distribution in the
last five decades. Especially, the results in the specialised literature for these multivariate
generalisations, have been proposed defining the vector and matrix variate cases, element-to-element. For
the vectorial case we mention \citet{dd:06}, \citet{dd:07} and for the matrix case we have \citet{cl:12},
\citet{slcc:15}, \citet{cldg:16}.  Recently, \citet{dgcl:19b} closed the problem for a non singular
matrix variate version in terms of a matrix transformation open a perspective to the solution in the
singular case.

Thus in this paper we obtain a singular version of this matrix transformation and then we find the
corresponding singular matrix variate generalised Birnbaum-Saunders distribution with respect to the
Hausdorff measure (see \citet[Section 19]{b:86} and \cite{dggjm:97}). For this task we need
consider the following aspects.
\begin{itemize}
  \item Given a singular random matrix, if a density function with respect certain measure is obtained,
    we must know that both the density and the measure are non unique. However, no matters what density
    and corresponding measure is under consideration, they provide the same probability results,
    inference, etc., see \citet{k:68} and \citet{r:05}.
  \item Another very important aspect warns about the combination of results under different densities
    and measures.
  \item In terms of the previous point, this is the source of explanation for a number of wrong applications
    of the published results, see \citet{dg:07}.
  \item As a consequence, when alternative approaches of literature are used in the setting of the present
    work, it should be noted that inconsistent algebraic, probabilistic and conceptual results are obtained.
    Furthermore, in those papers the approach followed in this work is validated by different authors, see
    \citet{z:07}, \citet{dg:07}, and \citet{bo:08}, among others.
  \item A consistent presentation on the theory of singular random matrix and vector distributions can be
    found in \citet{k:68}, \citet{uh:94}, \citet{r:05}, \citet{dggj:97,dggj:05,dggj:06a, dggj:06b}, \citet{dggjm:97},
    \citet{dggg:05a,dggg:05b},
    among others.
\end{itemize}

The above discussion is placed in the paper in two parts. Some preliminary results and new Jacobians are
studied in Section \ref{sec:2}. Then, Section \ref{sec:3} proposes the main result of the article.
Finally, certain special cases and some basic properties are derived.

\section{Preliminary results}\label{sec:2}

Some preliminary results about the singular matrix variate elliptical distribution are summarised below.

Two Jacobians for matrix transformations with respect to Hausdorff measure are computed. First, some
results and notations about the required matrix algebra are considered, see \citet{dggg:05b},
\citet{dggj:05}, \citet{r:05} and \citet{mh:05}.

\subsection{Notation}

Let $\mathcal{L}_{m,n}(q)$ be the linear space of all $n \times m$ real matrices of rank $q \leq
\min(n,m)$ and ${\mathcal L}_{m,n}^{+}(q)$ be the linear space of all $N \times m$ real matrices of rank
$q \leq \min(N,m)$ with $q$ distinct singular values. The set of matrices $\mathbf{H}_{1} \in {\mathcal
L}_{m,n}$ such that $\mathbf{H}'_{1}\mathbf{H}_{1} = \mathbf{I}_{m}$ is a manifold denoted ${\mathcal
V}_{m,n}$, called Stiefel manifold. In particular, ${\mathcal V}_{m,m}$ is the group of orthogonal
matrices ${\mathcal O}(m)$. Denote by ${\mathcal S}_{m}$, the homogeneous space of $m \times m$ positive
definite symmetric matrices; $\mathcal{S}_{m}^{+}(q)$, the $(mq - q(q - 1)/2)$-dimensional manifold of
rank $q$ positive semidefinite $m \times m$ symmetric matrices with $q$ distinct positive eigenvalues.
For all matrix $\mathbf{A} \in \mathcal{L}_{m,n}(q)$ exist $\mathbf{A}^{+} \in \mathcal{L}_{n,m}(q)$
which is termed \emph{Moore-Penrose inverse}. Similarly, for all matrix $\mathbf{A} \in
\mathcal{L}_{m,n}(q)$ exist $\mathbf{A}^{-} \in \mathcal{L}_{m,n}(r)$, $r \geq q$ which is termed
\emph{conditional (or generalised) inverse} is such that $\mathbf{AA}^{-}\mathbf{A} = \mathbf{A}$. The
\emph{eigenvalues} of $\mathbf{A} \in \mathcal{L}_{m,m}(q)$ are the roots of the equation
$|\mathbf{A}-\lambda \mathbf{I}_{m}| = 0$. The $i$-th eigenvalue of $\mathbf{A}$ shall be denoted as
$\ch_{i}(\mathbf{\mathbf{A}})$. Given  $\mathbf{A} \in \mathcal{S}_{m}^{+}(q)$, there exist
$\mathbf{A}^{1/2} \in \mathcal{S}_{m}^{+}(q)$ such that $\mathbf{A} = \left(\mathbf{A}^{1/2}
\right)^{2}$, which is termed \emph{positive (definite) semi-definite root matrix}.

\subsection{Matrix variate distribution.}

\begin{defn} \label{def1}
It is said that $\mathbf{Y} \in \mathcal{L}_{m,n}(q)$, $q = \min(r,s)$, has a \emph{singular matrix variate
elliptically contoured distribution} if its density $dF_{\mathbf{Y}}(\mathbf{Y})$ is given by:
$$
  =\frac{1}{\left(\displaystyle\prod_{i=1}^{r} \ch_{i}(\mathbf{\Sigma})^{s/2}\right) \
  \left(\displaystyle\prod_{j=1}^{s} \ch_{j}(\mathbf{\Theta})^{r/2}\right)} h\left(\tr \mathbf{\Sigma}^{-} (\mathbf{Y} -
  \boldgreek{\mu})' \mathbf{\Theta}^{-} (\mathbf{Y} -   \boldgreek{\mu})\right)(d\mathbf{Y})
$$
where  $\boldsymbol{\mu} \in \Re^{n\times m}$, $\mathbf{\Sigma} \in \mathcal{S}_{m}^{+}(r)$, $
\mathbf{\Theta} \in \mathcal{S}_{n}^{+}(s)$, and $(d\mathbf{Y})$ is the Hausdorff measure. The function
$h: \Re \rightarrow [0,\infty)$ is termed the generator function and satisfies $\int_{0}^\infty
u^{rs-1}h(u^2)du < \infty$. Such a distribution is denoted by $\mathbf{Y}\sim \mathcal{E}_{n\times
m}^{r,s}(\boldsymbol{\mu},\mathbf{\Theta} \otimes \mathbf{\Sigma}, h)$, omitting the supra-index when $r
= m$ and $s = n$, see \citet{dggg:05b}. Observe that if $\mathbf{Y} \in \mathcal{L}_{m,n}^{+}(q)$, then
there exist $\mathbf{V}_{1} \in {\mathcal V}_{q,n}$, $\mathbf{W}'_{1} \in {\mathcal V}_{q,m}$ and
$\mathbf{L}= \diag(l_{1}, \dots,l_{q})$, $l_{1}> \cdots > l_{q}>0$, such that $\mathbf{Y} =
\mathbf{V}_{1}\mathbf{LW}'_{1}$, is the nonsingular part of the singular value decomnposition (SVD),
\citet[p. 42, 1973]{r:05}. Then, the Hausdorff measure $(d\mathbf{Y})$ can be explicitly defined as follows
\begin{equation}\label{h1}
    (d\mathbf{Y}) = 2^{-q} \prod_{i=1}^{q}l_{i}^{n + m - 2q} \prod_{i < j}^{q}(l_{i}^{2} - l_{j}^{2})
          (\mathbf{V}'_{1}d\mathbf{V}_{1})(\mathbf{W}'_{1}d\mathbf{W}_{1})\bigwedge_{i=1}^{q}dl_{i},
\end{equation}
but quoting that the density and the measure defined in this way are not unique, see \citet{k:68} and
\citet{dggg:05b}.
\end{defn}

When $\boldsymbol{\mu}=\mathbf{0}_{n\times m}$, $\mathbf{\Sigma}= \mathbf{I}_{m}$ and $ \mathbf{\Theta} =
\mathbf{I}_{n}$, such distribution is termed \emph{matrix variate symmetric distribution} and shall
be denoted as $\mathbf{Y} \sim \mathcal{E}_{n \times m}(\mathbf{0}, \mathbf{I}_{nm}, h)$.

This class of matrix variate distributions includes\emph{ normal, contaminated normal, Pearson type II
and VI, Kotz, logistic, power exponential}, and so on; these distributions have tails that are weighted
more or less, and/or they have greater or smaller degree of kurtosis than the normal distribution.

In addition, note that if $\mathbf{Y}\sim \mathcal{E}_{n\times m}^{r,s}(\boldsymbol{\mu},\mathbf{\Theta}
\otimes \mathbf{\Sigma}, h)$, and $\mathbf{A} \in \mathcal{L}_{n,a}(k)$ and $\mathbf{B} \in
\mathcal{L}_{m,b}(t)$. Then, $\mathbf{AYB}'\sim \mathcal{E}_{a\times b}^{\alpha,\beta}(\boldsymbol{A\mu
B}',\mathbf{A}'\mathbf{\Theta A} \otimes \mathbf{B'}\mathbf{\Sigma B}, h)$, where $\alpha$ and $\beta$
are the ranks of $\mathbf{A}'\mathbf{\Theta A}$ and $\mathbf{B'}\mathbf{\Sigma B}$, respectively; where
$\alpha \leq \min(r,k)$ and $\beta \leq \min(s,t)$.

\subsection{Jacobians}

\begin{lem}\label{lem1}
Let $\mathbf{U}$ and $\mathbf{W} \in \mathcal{L}_{m,n}(p)$, such that
\begin{equation}\label{WW}
    \mathbf{U} = \mathbf{W} - \mathbf{W}^{'+}.
\end{equation}
Then
\begin{equation}\label{dUU1}
  (d\mathbf{U}) =
        \left\{
              \begin{array}{l}
                 \displaystyle\prod_{i=1}^{p}\left(1 - d_{i}^{-2}\right)^{n+m-2p} \left(1+d_{i}^{-2}\right)
                 \prod_{i<j}^{p}\left(1- d_{i}^{-2}d_{j}^{-2}\right)(d\mathbf{W})\\
                 \displaystyle\prod_{i=1}^{p}d_{i}^{-2(n+m-p)}\left(d_{i}^{2}-1\right)^{n+m-2p} \left(1+d_{i}^{2}\right)
                 \prod_{i<j}^{p}\left(d_{i}^{2}d_{j}^{2}-1\right)(d\mathbf{W}),
              \end{array}
        \right.
\end{equation}
where $d_{i}^{2}= \ch_{i}(\mathbf{W}'\mathbf{W})$, $i = 1,2,\dots,p$, $d_{1} > d_{2} > \cdots
> d_{p} > 0$.
\end{lem}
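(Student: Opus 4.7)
The plan is to reduce everything to the non-singular SVD $\mathbf{W}=\mathbf{V}_{1}\mathbf{D}\mathbf{P}_{1}'$ that defines the Hausdorff measure in (\ref{h1}), rewrite $\mathbf{U}$ in the same frame, and then compare the two expressions of the measure.

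First I would use the factorisation $\mathbf{W}=\mathbf{V}_{1}\mathbf{D}\mathbf{P}_{1}'$ with $\mathbf{V}_{1}\in\mathcal{V}_{p,n}$, $\mathbf{P}_{1}\in\mathcal{V}_{p,m}$ and $\mathbf{D}=\diag(d_{1},\dots,d_{p})$, $d_{1}>\dots>d_{p}>0$. The Moore--Penrose formula then gives $\mathbf{W}^{'+}=\mathbf{V}_{1}\mathbf{D}^{-1}\mathbf{P}_{1}'$, so
\[
  \mathbf{U}=\mathbf{V}_{1}\,\mathbf{E}\,\mathbf{P}_{1}',\qquad \mathbf{E}=\diag(e_{1},\dots,e_{p}),\qquad e_{i}=d_{i}-d_{i}^{-1}.
\]
Since $f(x)=x-x^{-1}$ is strictly increasing on $(0,\infty)$, the $e_{i}$ are distinct; restricting to the open subset where no $d_{i}=1$ (the rank-$p$ hypothesis actually forces this) the $|e_{i}|$ are strictly positive and constitute the non-zero singular values of $\mathbf{U}$. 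After absorbing signs into the columns of $\mathbf{V}_{1}$ (or of $\mathbf{P}_{1}$) the expression $\mathbf{U}=\mathbf{V}_{1}\mathbf{E}\mathbf{P}_{1}'$ is an admissible non-singular SVD of $\mathbf{U}$, so formula (\ref{h1}) applies to both $\mathbf{W}$ and $\mathbf{U}$ with the \emph{same} orthonormal factors.

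Next I would apply (\ref{h1}) to each side and read off the Jacobian simply by comparing the diagonal pieces, since $(\mathbf{V}_{1}'d\mathbf{V}_{1})(\mathbf{P}_{1}'d\mathbf{P}_{1})$ and the $2^{-p}$ factor cancel. The simplification of the diagonal block rests on three elementary identities
\[
  e_{i}=d_{i}^{-1}(d_{i}^{2}-1)=d_{i}(1-d_{i}^{-2}),\qquad de_{i}=(1+d_{i}^{-2})\,dd_{i},
\]
\[
  e_{i}^{2}-e_{j}^{2}=(d_{i}^{2}-d_{j}^{2})(1-d_{i}^{-2}d_{j}^{-2})=d_{i}^{-2}d_{j}^{-2}(d_{i}^{2}-d_{j}^{2})(d_{i}^{2}d_{j}^{2}-1).
\]
Plugging these into the ratio
\[
  \frac{(d\mathbf{U})}{(d\mathbf{W})}=\frac{\prod_{i}e_{i}^{n+m-2p}\,\prod_{i<j}(e_{i}^{2}-e_{j}^{2})\,\bigwedge_{i}de_{i}}{\prod_{i}d_{i}^{n+m-2p}\,\prod_{i<j}(d_{i}^{2}-d_{j}^{2})\,\bigwedge_{i}dd_{i}}
\]
and using $e_{i}=d_{i}(1-d_{i}^{-2})$ immediately produces the first form stated in the lemma; rewriting the same factors using $e_{i}=d_{i}^{-1}(d_{i}^{2}-1)$ and the second version of the identity for $e_{i}^{2}-e_{j}^{2}$ yields the second form. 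A quick check of the exponent $-2(n+m-p)$ in the second line, obtained by collecting all the hidden powers of $d_{i}^{-2}$ coming from each of the three factors (one $-(n+m-2p)$ from the $e_{i}^{n+m-2p}$, one $-1$ from $1+d_{i}^{-2}$, and one $-(p-1)$ from the $p-1$ pairs that contain index $i$ in the product over $i<j$), closes the computation.

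The only delicate step is the sign bookkeeping: if some $d_{i}<1$ the corresponding $e_{i}$ is negative, and to apply (\ref{h1}) to $\mathbf{U}$ one must absorb signs into $\mathbf{V}_{1}$ so as to obtain genuine positive singular values $|e_{i}|$ in decreasing order. This only flips finitely many columns of an orthogonal matrix, which leaves the measure $(\mathbf{V}_{1}'d\mathbf{V}_{1})$ invariant, so the final Jacobian is unaffected. I expect this sign/ordering issue, rather than any algebra, to be the only genuine obstacle; once it is resolved the two closed-form ratios are a direct substitution.
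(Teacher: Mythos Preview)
Your proposal is correct and follows essentially the same route as the paper: write the non-singular part of the SVD $\mathbf{W}=\mathbf{H}_{1}\mathbf{D}\mathbf{Q}_{1}'$, observe that $\mathbf{U}=\mathbf{H}_{1}(\mathbf{D}-\mathbf{D}^{-1})\mathbf{Q}_{1}'$ shares the same orthonormal factors, and then compare the diagonal contributions; the three algebraic identities you list are exactly (\ref{sv1}), (\ref{sv2}) and (\ref{sv3}) in the paper. The only cosmetic difference is that the paper packages the comparison step by invoking the general Jacobian for functions of singular values from \citet{dggj:05} (which yields (\ref{sv0}) directly), whereas you re-derive that step by applying the explicit Hausdorff measure (\ref{h1}) to both $\mathbf{U}$ and $\mathbf{W}$; your extra remark on absorbing signs when some $d_{i}<1$ is a point the cited reference handles implicitly.
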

\begin{proof}
Let $\mathbf{W} = \mathbf{H}_{1}\mathbf{DQ}_{1}'$ the non-singular part of the singular value
factorisation of $\mathbf{W}$, where $\mathbf{H}_{1} \in \mathcal{V}_{p,m}$, $\mathbf{D} = \diag(d_{1},
\dots, d_{p})$, $d_{1}> \cdots > d_{p} > 0$ and $\mathbf{Q}_{1} \in \mathcal{V}_{p,n}$, with
$d_{i}^{2}=\ch_{i}(\mathbf{W}'\mathbf{W})$, see \citet[Theorem A9.10, p. 593]{mh:05}. By \citet[Problem
28e, pp. 76-77]{r:05} is know that $\mathbf{W}^{+} = \mathbf{Q}_{1}\mathbf{D}^{-1}\mathbf{H}'_{1}$. Then
from (\ref{WW})
\begin{eqnarray*}
  \mathbf{U}&=& \ \mathbf{H}_{1}\mathbf{D}\mathbf{Q}'_{1}- \left(\mathbf{Q}_{1}\mathbf{D}^{-1}\mathbf{H}'_{1}\right)' \\
   &=& \mathbf{H}_{1}\left(\mathbf{D}-\mathbf{D}^{-1}\right)\mathbf{Q}'_{1}.
\end{eqnarray*}
By \citet{dggj:05}, if we take $g(\alpha_{i}) = d_{i}-d_{i}^{-1}$ then we obtain
\begin{equation}\label{sv0}
    (d\mathbf{U}) = \prod_{i=1}^{p}\left( \frac{d_{i}-d_{i}^{-1}}{d_{i}}\right)^{n+m-2p}  \prod_{i<j}^{p}
  \frac{\left(d_{i}-d_{i}^{-1}\right)^{2}-\left(d_{j}-d_{j}^{-1}\right)^{2}}{d_{i}^{2}-d_{j}^{2}}
$$
$$
  \hspace{5cm}\times \ \prod_{i=1}^{p} \frac{d\left(d_{i}-d_{i}^{-1}\right)}{dd_{i}} (d\mathbf{W}).
\end{equation}
Observe that
\begin{equation}\label{sv1}
     \prod_{i=1}^{p} \frac{d\left(d_{i}-d_{i}^{-1}\right)}{dd_{i}} = \left\{
        \begin{array}{l}
          \displaystyle\prod_{i=1}^{p}\left(1+d_{i}^{-2}\right) \\
          \displaystyle\prod_{i=1}^{p}d_{i}^{-2}\prod_{i=1}^{m}\left(1+d_{i}^{2}\right),
        \end{array}
      \right.
\end{equation}
\begin{equation}\label{sv2}
    \prod_{i=1}^{m}\left( \frac{d_{i}-d_{i}^{-1}}{d_{i}}\right)^{n+m-2p} =
            \left\{
                \begin{array}{l}
                    \displaystyle\prod_{i=1}^{m}\left(1 - d_{i}^{-2}\right)^{n+m-2p} \\
                    \displaystyle\prod_{i=1}^{m}
                    d_{i}^{-2(n+m-2p)}\prod_{i=1}^{m}\left(d_{i}^{2}-1\right)^{n+m-2p}.
                \end{array}
            \right.
\end{equation}
Also note that
\begin{eqnarray*}
  \left(d_{i}-d_{i}^{-1}\right)^{2}-\left(d_{j}-d_{j}^{-1}\right)^{2} &=& \left(\frac{d_{i}^{2}-1}{d_{i}}\right)^{2}
    - \left(\frac{d_{j}^{2}-1}{d_{j}}\right)^{2}\\
   &=& \frac{d_{j}^{2}\left(d_{i}^{2}-1\right)^{2} - d_{i}^{2}\left(d_{j}^{2}-1\right)^{2}}{d_{i}^{2}d_{j}^{2}} \\
   &=& \frac{d_{j}^{2}d_{i}^{4}-2d_{j}^{2}d_{i}^{2}+d_{j}^{2}-d_{i}^{2}d_{j}^{4}+2d_{i}^{2}d_{j}^{2}-d_{i}^{2}}{d_{i}^{2}d_{j}^{2}} \\
   &=& \frac{\left(d_{i}^{2}d_{j}^{2} - 1\right)\left(d_{i}^{2}-d_{j}^{2}\right)}{d_{i}^{2}d_{j}^{2}}.
\end{eqnarray*}
Thus
\begin{eqnarray}
  \prod_{i<j}^{p} \frac{\left(d_{i}-d_{i}^{-1}\right)^{2}-\left(d_{j}-d_{j}^{-1}\right)^{2}}{d_{i}^{2}-d_{j}^{2}}
  &=& \prod_{i<j}^{p} \frac{\displaystyle\frac{\left(d_{i}^{2}d_{j}^{2} - 1\right)\left(d_{i}^{2}-d_{j}^{2}\right)}
  {d_{i}^{2}d_{j}^{2}}}{d_{i}^{2}-d_{j}^{2}}\nonumber \\  \label{sv3}
   &=& \left\{
          \begin{array}{ll}
            \displaystyle\prod_{i=1}^{p} d_{i}^{-2(p-1)}\prod_{i<j}^{p}\left(d_{i}^{2}d_{j}^{2} - 1\right)\\
            \displaystyle\prod_{i<j}^{p}\left(1-d_{i}^{-2}d_{j}^{-2}\right).
          \end{array}
        \right.
\end{eqnarray}
Where we have used the expression
$$
  \prod_{i<j}^{p}\frac{1}{d_{i}^{2}d_{j}^{2}} = \prod_{i=1}^{p}d_{i}^{-2(p-1)}.
$$
\end{proof}
Substituting (\ref{sv1}), (\ref{sv2}) and (\ref{sv3}) into (\ref{sv0}) the desired results (\ref{dUU1})
are obtained.

\begin{thm}\label{teo1}
Consider the follow matrix transformation
\begin{equation}\label{mvBS}
    \mathbf{Z}  = \left (\mathbf{V}\mathbf{\Delta}^{+} - \mathbf{V}^{'+}\mathbf{\Delta}\right)\mathbf{\Xi}^{+},
\end{equation}
where $\mathbf{Z}$ and $\mathbf{V} \in \mathcal{L}_{m,n}(p)$, $\mathbf{\Delta}$ and $\mathbf{\Xi} \in
\mathcal{S}_{m}^{+}(s)$, $s \leq m$. Then
\begin{equation}\label{jz}
  (d\mathbf{Z}) = \frac{\G(\theta^{2})}{\displaystyle\prod_{i = 1}^{s}\ch_{i}(\mathbf{\Xi})^{n}\prod_{j = 1}^{s}\ch_{j}
  (\boldgreek{\beta})^{n/2}}(d\mathbf{V})
\end{equation}
with
$$
  \G(q,\theta^{2}) =
         \left\{
              \begin{array}{l}
                 \displaystyle\prod_{i=1}^{q}\left(1 - \theta_{i}^{-2}\right)^{n+m-2q} \left(1+\theta_{i}^{-2}\right)
                 \prod_{i<j}^{q}\left(1- \theta_{i}^{-2}\theta_{j}^{-2}\right)\\
                 \displaystyle\prod_{i=1}^{q}\theta_{i}^{-2(n+m-q)}\left(\theta_{i}^{2}-1\right)^{n+m-2q} \left(1+\theta_{i}^{2}\right)
                 \prod_{i<j}^{q}\left(\theta_{i}^{2}\theta_{j}^{2}-1\right).
              \end{array}
        \right.
$$
Here $\theta_{i}^{2} = \ch_{i}(\mathbf{V}'\mathbf{V}\boldgreek{\beta}^{+})$, $i = 1,\dots,q$, where $q$
denotes the rank of $\mathbf{V}\boldgreek{\beta}^{+}$, $q \leq \min(p,s)$ and $\boldgreek{\beta} =
\mathbf{\Delta}^{2}$.
\end{thm}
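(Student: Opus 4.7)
The plan is to split the transformation (\ref{mvBS}) into three elementary substitutions and chain their Jacobians, with Lemma \ref{lem1} providing the key middle factor. Specifically, I would set $\mathbf{W} = \mathbf{V}\mathbf{\Delta}^{+}$, then $\mathbf{U} = \mathbf{W} - \mathbf{W}^{'+}$, and finally $\mathbf{Z} = \mathbf{U}\mathbf{\Xi}^{+}$, treating these as three successive changes of variable on $\mathcal{L}_{m,n}(p)$.

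First I would dispatch the two outer (linear) Jacobians. Post-multiplication by the symmetric rank-$s$ matrix $\mathbf{\Delta}^{+}$ contributes, via the spectral decomposition of $\mathbf{\Delta}$ and the standard Hausdorff-measure Jacobian on the range of $\mathbf{\Delta}^{+}$, a factor $\prod_{j=1}^{s}\ch_{j}(\mathbf{\Delta})^{-n} = \prod_{j=1}^{s}\ch_{j}(\boldgreek{\beta})^{-n/2}$, so that $(d\mathbf{W}) = \prod_{j=1}^{s}\ch_{j}(\boldgreek{\beta})^{-n/2}(d\mathbf{V})$. An analogous computation with $\mathbf{\Xi}$ yields $(d\mathbf{Z}) = \prod_{i=1}^{s}\ch_{i}(\mathbf{\Xi})^{-n}(d\mathbf{U})$. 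These two factors reproduce exactly the denominator of (\ref{jz}).

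For the middle step, I would verify the algebraic identity $(\mathbf{V}\mathbf{\Delta}^{+})^{'+} = \mathbf{V}^{'+}\mathbf{\Delta}$, so that $\mathbf{U} = \mathbf{V}\mathbf{\Delta}^{+} - \mathbf{V}^{'+}\mathbf{\Delta}$ is precisely the bracketed expression in (\ref{mvBS}). With that identity in hand, Lemma \ref{lem1} applies directly to $\mathbf{W}$ and delivers $(d\mathbf{U}) = \G(q,\theta^{2})(d\mathbf{W})$, upon noting that the squared singular values of $\mathbf{W}$ satisfy
$$ d_{i}^{2} = \ch_{i}(\mathbf{W}'\mathbf{W}) = \ch_{i}(\mathbf{\Delta}^{+}\mathbf{V}'\mathbf{V}\mathbf{\Delta}^{+}) = \ch_{i}(\mathbf{V}'\mathbf{V}(\mathbf{\Delta}^{+})^{2}) = \ch_{i}(\mathbf{V}'\mathbf{V}\boldgreek{\beta}^{+}) = \theta_{i}^{2}, $$
since the nonzero eigenvalues of $ABA$ and $A^{2}B$ coincide when $A$ is symmetric. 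Chaining the three Jacobians then yields (\ref{jz}).

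The main obstacle will be establishing the Moore-Penrose identity $(\mathbf{V}\mathbf{\Delta}^{+})^{'+} = \mathbf{V}^{'+}\mathbf{\Delta}$, because the reversal law $(AB)^{+}=B^{+}A^{+}$ fails for arbitrary singular factors. A simple two-by-two example with generic $\mathbf{V}$ and rank-one symmetric $\mathbf{\Delta}$ shows the identity can collapse unless the range of $\mathbf{V}'$ sits inside the range of $\mathbf{\Delta}$; that compatibility is, however, natural to impose on the singular manifold supporting the forthcoming Birnbaum-Saunders distribution, and under it the identity follows from the SVD of $\mathbf{V}$ together with the symmetry of $\mathbf{\Delta}^{+}$. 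If one prefers not to invoke such a range condition, the fallback is to bypass the identity by inserting the nonsingular parts of the SVD of $\mathbf{V}$ and of the spectral decomposition of $\mathbf{\Delta}$ directly into the bracket of (\ref{mvBS}), reducing it to a product of the form $\mathbf{H}_{1}g(\boldgreek{\theta})\mathbf{Q}_{1}'$ with $g(\theta_{i}) = \theta_{i} - \theta_{i}^{-1}$, and then applying the general Jacobian formula from \citet{dggj:05} already used in Lemma \ref{lem1}. Careful bookkeeping of the $\theta_{i}$-factors through this reduction is the most delicate part of the calculation.
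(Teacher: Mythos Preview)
Your three-step decomposition $\mathbf{V}\mapsto\mathbf{W}=\mathbf{V}\mathbf{\Delta}^{+}\mapsto\mathbf{U}=\mathbf{W}-\mathbf{W}^{'+}\mapsto\mathbf{Z}=\mathbf{U}\mathbf{\Xi}^{+}$, with Lemma~\ref{lem1} supplying the middle Jacobian and the linear Jacobians of \citet{dg:07} handling the two outer steps, is exactly the route the paper takes (your $\mathbf{W}$ is the paper's $\mathbf{Y}$), including the eigenvalue identification $d_{i}^{2}=\ch_{i}(\mathbf{\Delta}^{+}\mathbf{V}'\mathbf{V}\mathbf{\Delta}^{+})=\ch_{i}(\mathbf{V}'\mathbf{V}\boldgreek{\beta}^{+})$. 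Your explicit concern about the reversal identity $(\mathbf{V}\mathbf{\Delta}^{+})^{'+}=\mathbf{V}^{'+}\mathbf{\Delta}$ is well placed---the paper simply writes $\mathbf{U}=\mathbf{Y}-\mathbf{Y}^{'+}$ without justifying that this reproduces the bracket in (\ref{mvBS})---so your range-compatibility remark and SVD fallback are in fact an improvement in rigor over the original argument.
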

\begin{proof}
Denote
$$
    \mathbf{Z}  = \left (\mathbf{V}\mathbf{\Delta}^{+} -
 \mathbf{V}^{'+}\mathbf{\Delta}\right)\mathbf{\Xi}^{+} = \mathbf{U}\mathbf{\Xi}^{+},
$$
with $\mathbf{U} = \mathbf{Y}-\mathbf{Y}^{'+}$ and $\mathbf{Y} = \mathbf{V\Delta}^{+}$. Then by
\citet{dg:07}
\begin{equation}\label{jzu}
    (d\mathbf{Z}) = \prod_{i=1}^{s}\ch_{i}(\mathbf{\Xi}^{+}\mathbf{\Xi}^{+})^{n/2}(d\mathbf{U}) =
  \frac{(d\mathbf{U})}{\displaystyle\prod_{i=1}^{s}\ch_{i}(\mathbf{\Xi})^{n}}
\end{equation}
similarly, consider $\boldgreek{\beta}= \mathbf{\Delta}^{2}$, then
\begin{equation}\label{du}
    (d\mathbf{Y}) = \prod_{i=1}^{s}\ch_{i}(\mathbf{\Delta}^{+}\mathbf{\Delta}^{+})^{n/2}(d\mathbf{V}) =
      \prod_{i=1}^{s}\ch_{i}((\mathbf{\Delta}^{2})^{+})^{n/2}(d\mathbf{V}) =
     \frac{(d\mathbf{V})}{\displaystyle\prod_{i=1}^{s}\ch_{i}(\boldgreek{\beta})^{n/2}}.
\end{equation}
Now, substituting (\ref{dUU1}) and (\ref{du}) in (\ref{jzu}), the desired result is obtained, noting that
$\theta_{i}^{2} = \ch_{i}(\mathbf{Y}'\mathbf{Y}) =
\ch_{i}(\mathbf{\Delta}^{+}\mathbf{V}'\mathbf{V}\mathbf{\Delta}^{+})=
\ch_{i}(\mathbf{V}'\mathbf{V}(\mathbf{\Delta}^{+})^{2})=\ch_{i}(\mathbf{V}'\mathbf{V}\boldgreek{\beta}^{+})$
$i = 1,\dots,q$, where $q$ is the rank of $\mathbf{V}\boldgreek{\beta}^{+}$, $q \leq \min(p,s)$.
\end{proof}

\section{Singular matrix variate generalised Birnbaum-Saunders distribution}\label{sec:3}

As a first goal, the density of the matrix variate $\mathbf{V}$ defined in (\ref{mvBS}) is obtained when
$\mathbf{Z}$ has a singular matrix variate elliptically contoured distribution. This distribution shall be
termed \emph{singular matrix variate generalised square root Birnbaum-Saunders distribution}. Then the
main result is derived by finding the density of the \emph{singular matrix variate generalised Birnbaum-Saunders
distribution}. Finally, some special cases and basic properties of the singular
matrix variate generalised Birnbaum-Saunders distribution can be derived.

Now,  instead of making the change of variable (\ref{nem}), where $Z \sim
\mathcal{E}(0,1;h)$, we can propose the transformation
$$
  Y = \alpha Z = \frac{V}{\sqrt{\beta}} - \frac{\sqrt{\beta}}{V}
$$
where  $Y \sim \mathcal{E}(0,\alpha^{2};h)$. Next, we extend this idea to the singular matrix variate
case.

\begin{thm}\label{teo2}
Assume that $\mathbf{Y} \sim \mathcal{E}_{n \times m}^{n,s}(\mathbf{0}_{n \times m}, \mathbf{I}_{n}
\otimes \mathbf{\Xi}^{2}, h)$ and consider the following matrix transformation
\begin{equation}\label{mnem}
    \mathbf{Y}(=\mathbf{Z}\mathbf{\Xi})= \left (\mathbf{V}\mathbf{\Delta}^{+} - \mathbf{V}^{'+}\mathbf{\Delta}\right
    ),
\end{equation}
where $\mathbf{\Xi}^{2} \in \mathcal{S}_{m}^{+}(s)$ is the shape parameter matrix; $\mathbf{\Delta} \in
\mathcal{S}_{m}^{+}(s)$, is the scale parameter matrix, such that $\mathbf{\Delta}^{2} =
\boldgreek{\beta}$; and $\mathbf{V} \in \mathcal{L}_{m,n}^{+}(p)$, $p = \min(n,s)$. Then the density with
respect to Hausdorff measure is
\begin{eqnarray*}
  dF_{\mathbf{V}}(\mathbf{V}) &=& \frac{\G(q,\theta^{2})}{\displaystyle\prod_{i = 1}^{s}\ch_{i}(\mathbf{\Xi})^{n}\prod_{j = 1}^{s}\ch_{j}
  (\boldgreek{\beta})^{n/2}} \\
   && \times h\left[\tr \mathbf{\Xi}^{+2}
   \left(\mathbf{\Delta}^{+}\mathbf{V}'\mathbf{V}\mathbf{\Delta}^{+} + \mathbf{\Delta}\left(\mathbf{V}'
   \mathbf{V}\right)^{+}\mathbf{\Delta} - 2 \mathbf{\Delta}\mathbf{V}^{+}
   \mathbf{V}\mathbf{\Delta}^{+}\right) \right](d\mathbf{V}),
\end{eqnarray*}
where $(d\mathbf{V})$ is defined in (\ref{h1}), $\mathbf{\Xi}^{+2} = (\mathbf{\Xi}^{2})^{+}$ and
$$
  \G(q,\theta^{2}) =
         \left\{
              \begin{array}{l}
                 \displaystyle\prod_{i=1}^{q}\left(1 - \theta_{i}^{-2}\right)^{n+m-2q} \left(1+\theta_{i}^{-2}\right)
                 \prod_{i<j}^{q}\left(1- \theta_{i}^{-2}\theta_{j}^{-2}\right)\\
                 \displaystyle\prod_{i=1}^{q}\theta_{i}^{-2(n+m-q)}\left(\theta_{i}^{2}-1\right)^{n+m-2q} \left(1+\theta_{i}^{2}\right)
                 \prod_{i<j}^{q}\left(\theta_{i}^{2}\theta_{j}^{2}-1\right),
              \end{array}
        \right.
$$
$\theta_{i}^{2} = \ch_{i}(\mathbf{V}'\mathbf{V}\boldgreek{\beta}^{+})$, $i = 1,\dots,q$, where $q$
denotes de rank of $\mathbf{V\Delta}^{+}$, $q \leq \min(p,s)$.
\end{thm}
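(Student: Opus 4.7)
My plan is to derive the density of $\mathbf{V}$ from the singular elliptical density of $\mathbf{Y}$ via the change of variable dictated by (\ref{mnem}), combining Lemma \ref{lem1} with the Jacobian of $\mathbf{V}\mapsto\mathbf{V}\mathbf{\Delta}^{+}$ already recorded as (\ref{du}). Transformation (\ref{mnem}) is exactly the ``$\mathbf{Y}=\mathbf{Z}\mathbf{\Xi}$'' projection of (\ref{mvBS}); consequently the $\mathbf{\Xi}^{+}$ factor that enters Theorem \ref{teo1} is absent from the Jacobian, and $\mathbf{\Xi}$ enters the final density only through the generator $h$ via the quadratic form.

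First I would write down the density of $\mathbf{Y}$. Because $\mathbf{\Theta}=\mathbf{I}_{n}$ contributes a unit factor and $\ch_{i}(\mathbf{\Xi}^{2})^{n/2}=\ch_{i}(\mathbf{\Xi})^{n}$, Definition \ref{def1} reduces to
$$dF_{\mathbf{Y}}(\mathbf{Y}) = \frac{h(\tr \mathbf{\Xi}^{+2}\mathbf{Y}'\mathbf{Y})}{\prod_{i=1}^{s}\ch_{i}(\mathbf{\Xi})^{n}}\,(d\mathbf{Y}).$$

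Next I would assemble $(d\mathbf{Y})\to(d\mathbf{V})$ in two stages. Setting $\mathbf{W}=\mathbf{V}\mathbf{\Delta}^{+}$, (\ref{mnem}) reads $\mathbf{Y}=\mathbf{W}-\mathbf{W}^{'+}$, so Lemma \ref{lem1} supplies $(d\mathbf{Y})=\G(q,\theta^{2})(d\mathbf{W})$, with $\theta_{i}^{2}=\ch_{i}(\mathbf{W}'\mathbf{W})=\ch_{i}(\mathbf{\Delta}^{+}\mathbf{V}'\mathbf{V}\mathbf{\Delta}^{+})=\ch_{i}(\mathbf{V}'\mathbf{V}\boldgreek{\beta}^{+})$ after using $\boldgreek{\beta}^{+}=(\mathbf{\Delta}^{+})^{2}$ and the cyclic invariance of characteristic roots. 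The remaining linear step reproduces (\ref{du}) verbatim, yielding $(d\mathbf{W})=(d\mathbf{V})/\prod_{i=1}^{s}\ch_{i}(\boldgreek{\beta})^{n/2}$.

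Finally I would expand $\mathbf{Y}'\mathbf{Y}$ into the symmetric form that appears inside $h$. Using $(\mathbf{V}')^{+}=(\mathbf{V}^{+})'$, the symmetry of the orthogonal projector $\mathbf{V}^{+}\mathbf{V}$ (hence $\mathbf{V}'(\mathbf{V}')^{+}=\mathbf{V}^{+}\mathbf{V}$), and $(\mathbf{V}'\mathbf{V})^{+}=\mathbf{V}^{+}(\mathbf{V}^{+})'$, direct multiplication produces four terms, and the two mixed summands coincide under $\tr(\mathbf{\Xi}^{+2}\,\cdot\,)$ by cyclicity of the trace, combining into $-2\,\mathbf{\Delta}\mathbf{V}^{+}\mathbf{V}\mathbf{\Delta}^{+}$. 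Substituting the expanded quadratic form together with the two Jacobian factors into the density of $\mathbf{Y}$ delivers the stated expression. I expect this last algebraic step to be the main obstacle: the Jacobian step is a routine application of Lemma \ref{lem1} and (\ref{du}), whereas careful bookkeeping with Moore--Penrose identities (in particular the symmetry of the projector $\mathbf{V}^{+}\mathbf{V}$) is what carries the probabilistic content and produces the precise symmetric argument of $h$ in the conclusion.
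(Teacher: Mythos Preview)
Your proposal is correct and follows essentially the same route as the paper: write the singular elliptical density of $\mathbf{Y}$ from Definition~\ref{def1}, apply the Jacobian of the transformation (\ref{mnem}), and expand $\tr\mathbf{\Xi}^{+2}\mathbf{Y}'\mathbf{Y}$ into the three-term symmetric form. The only cosmetic difference is that the paper cites Theorem~\ref{teo1} for the Jacobian, whereas you (more accurately, since the $\mathbf{\Xi}^{+}$ factor of (\ref{mvBS}) is absent from (\ref{mnem})) assemble it directly from Lemma~\ref{lem1} and (\ref{du}); this is exactly how Theorem~\ref{teo1} itself is proved, so the two arguments coincide.
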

\begin{proof}
By Definition \ref{def1}, the density of $\mathbf{Y}$ is
$$
  dF_{_{\mathbf{Y}}}(\mathbf{Y}) = \frac{1}{\displaystyle\prod_{i = 1}^{s}\ch_{i}(\mathbf{\Xi})^{n}} h(\tr
  \mathbf{\Xi}^{+2} \mathbf{Y}'\mathbf{Y}) (d\mathbf{Y}).
$$
Now, making the change of variable (\ref{mnem}), with corresponding Jacobian established in Theorem \ref{teo1}, and
observing that
\begin{eqnarray*}
  \tr \mathbf{\Xi}^{+2} \mathbf{Y}'\mathbf{Y} &=& \tr \mathbf{\Xi}^{+2} \left (\mathbf{V}\mathbf{\Delta}^{+}
  - \mathbf{V}^{'+}\mathbf{\Delta}\right)' \left (\mathbf{V}\mathbf{\Delta}^{+} - \mathbf{V}^{'+} \mathbf{\Delta}
  \right)\\
   &=& \tr \mathbf{\Xi}^{+2}
   \left(\mathbf{\Delta}^{+}\mathbf{V}'\mathbf{V}\mathbf{\Delta}^{+} + \mathbf{\Delta}\left(\mathbf{V}'
   \mathbf{V}\right)^{+}\mathbf{\Delta} - 2 \mathbf{\Delta}\mathbf{V}^{+}
   \mathbf{V}\mathbf{\Delta}^{+}\right).
\end{eqnarray*}
The density of $\mathbf{V}$ is obtained.
\end{proof}

The density of a singular matrix variate generalised Birnbaum-Saunders distribution is obtained in the
following result. This fact shall be denoted as $\mathbf{T} \sim
\mathcal{GBS}_{m}^{p}(q,s,n,\mathbf{\Xi}, \boldgreek{\beta},h)$, where $\mathbf{\Xi} \in
\mathcal{S}_{m}^{+}(s)$, is the shape parameter matrix, $\mathbf{\Delta} \in \mathcal{S}_{m}^{+}(s)$,
such that $\boldgreek{\beta}$ is the scale parameter matrix where $\mathbf{\Delta}^{2} =
\boldgreek{\beta}$. Also, note that $q$ denotes the rank of $\mathbf{V\Delta}^{+}$, $q \leq \min(p,s)$,
and $p = \min(n,s)$.

\begin{thm}\label{teo3}
Suppose that $\mathbf{T} \sim \mathcal{GBS}_{m}^{p}(q,s,n,\mathbf{\Xi}, \boldgreek{\beta}, h)$,
$\mathbf{T} \in \mathcal{S}_{m}^{+}(p)$, $\mathbf{\Xi} \in \mathcal{S}_{m}^{+}(s)$, and
$\boldgreek{\beta} \in \mathcal{S}_{m}^{+}(s)$; $\boldgreek{\beta} = (\mathbf{\Delta})^{2}$ and $p =
\min(n,s)$. Then the density of $\mathbf{T}$ with respect to the Hausdorf measure is
$$
    dF_{\mathbf{T}}(\mathbf{T})= \frac{\pi^{np/2}\G(q,\delta)}{2^{p}\Gamma_{p}[n/2]
    \displaystyle\prod_{i = 1}^{s}\ch_{i}(\mathbf{\Xi})^{n}\prod_{j = 1}^{s}\ch_{j}
  (\boldgreek{\beta})^{n/2}} |\mathbf{\Lambda}|^{(n-m-1)/2}\hspace{2cm}
$$
$$
    \times
    h\left[\tr \mathbf{\Xi}^{+2}\left(\mathbf{\Delta}^{+}\mathbf{T}\mathbf{\Delta}^{+} +
    \mathbf{\Delta}\mathbf{T}^{+}\mathbf{\Delta} - 2 \mathbf{\Delta T}^{+}\mathbf{T\Delta}^{+}\right) \right](d\mathbf{T}),
$$
where if $\mathbf{T} = \mathbf{V}'\mathbf{V} = \mathbf{Q}_{1}\mathbf{\Lambda Q}'_{1}$, $\mathbf{Q}_{1}
\in \mathcal{V}_{p,m}$ and $\mathbf{\Lambda} = \diag(\lambda_{1}, \dots,\lambda_{p})$, $\lambda_{1}>
\cdots >\lambda_{p}> 0$, then \citep{dggj:97}
\begin{equation}\label{h2}
    (d\mathbf{T}) = 2^{-p} \prod_{i=1}^{p}\lambda_{i}^{m - p} \prod_{i < j}^{p}(\lambda_{i} - \lambda_{j})
          (\mathbf{Q}'_{1}d\mathbf{Q}_{1})\bigwedge_{i=1}^{p}d\lambda_{i},
\end{equation}
and
$$
  \G(q,\delta) =
         \left\{
              \begin{array}{l}
                 \displaystyle\prod_{i=1}^{q}\left(1 - \delta_{i}^{-1}\right)^{n+m-2q} \left(1+\delta_{i}^{-1}\right)
                 \prod_{i<j}^{q}\left(1- \delta_{i}^{-1}\delta_{j}^{-1}\right)\\
                 \displaystyle\prod_{i=1}^{q}\delta_{i}^{-(n+m-q)}\left(\delta_{i}-1\right)^{n+m-2q} \left(1+\delta_{i}\right)
                 \prod_{i<j}^{q}\left(\delta_{i}\delta_{j}-1\right).
              \end{array}
        \right.
$$
Here $\delta_{i} = \ch_{i}(\boldgreek{\beta}^{+}\mathbf{T})$, $i = 1,\dots,q$, $q$ is the rank of $
\boldgreek{\beta}^{+}\mathbf{T}$, $q \leq \min(p,s)$, and $\Gamma_{p}[\cdot]$ denotes de multivariate
gamma function, see \citet[Definition 2.1.10, p.61]{mh:05},
$$
  \Gamma_{p}[a] = \pi^{p(p-1)/4} \prod_{i=1}^{p} \Gamma[a-(i-1)/2], [\re(a)>(p-1)/2]
$$
where $\re(\cdot)$ denotes de real part of the argument.
\end{thm}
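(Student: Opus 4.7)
The plan is to derive the density of $\mathbf{T}$ by starting from the density of $\mathbf{V}$ already obtained in Theorem \ref{teo2}, performing the change of variable $\mathbf{T} = \mathbf{V}'\mathbf{V}$, and integrating out the ``angular'' factor that remains after this non-injective transformation. First I would write $\mathbf{V}\in\mathcal{L}_{m,n}^{+}(p)$ in its nonsingular SVD form $\mathbf{V} = \mathbf{V}_{1}\mathbf{L}\mathbf{W}'_{1}$ with $\mathbf{V}_{1}\in\mathcal{V}_{p,n}$, $\mathbf{W}_{1}\in\mathcal{V}_{p,m}$ and $\mathbf{L} = \diag(l_{1},\dots,l_{p})$; then $\mathbf{T} = \mathbf{W}_{1}\mathbf{L}^{2}\mathbf{W}'_{1}$ is exactly the spectral decomposition of $\mathbf{T}$ with $\mathbf{Q}_{1}=\mathbf{W}_{1}$ and eigenvalues $\lambda_{i}=l_{i}^{2}$. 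Using (\ref{h1}) for $(d\mathbf{V})$, (\ref{h2}) for $(d\mathbf{T})$, and the substitution $d\lambda_{i}=2l_{i}\,dl_{i}$, the Vandermonde factors $\prod_{i<j}(l_{i}^{2}-l_{j}^{2})$ cancel and the powers of $l_{i}$ combine into
$$
    (d\mathbf{V}) \;=\; 2^{-p}\,\prod_{i=1}^{p}l_{i}^{\,n-m-1}\,(\mathbf{V}'_{1}d\mathbf{V}_{1})\,(d\mathbf{T}) \;=\; 2^{-p}\,|\mathbf{\Lambda}|^{(n-m-1)/2}\,(\mathbf{V}'_{1}d\mathbf{V}_{1})\,(d\mathbf{T}),
$$
which already produces the factor $|\mathbf{\Lambda}|^{(n-m-1)/2}$ that appears in the target formula.

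Next I would show that the integrand in the density of $\mathbf{V}$ depends only on $\mathbf{T}$, not on the left singular vectors $\mathbf{V}_{1}$. The terms $\mathbf{\Delta}^{+}\mathbf{V}'\mathbf{V}\mathbf{\Delta}^{+}$ and $\mathbf{\Delta}(\mathbf{V}'\mathbf{V})^{+}\mathbf{\Delta}$ are immediately $\mathbf{\Delta}^{+}\mathbf{T}\mathbf{\Delta}^{+}$ and $\mathbf{\Delta}\mathbf{T}^{+}\mathbf{\Delta}$. The only delicate piece is the cross term: a direct computation with the Moore--Penrose inverse $\mathbf{V}^{+}=\mathbf{W}_{1}\mathbf{L}^{-1}\mathbf{V}'_{1}$ gives $\mathbf{V}^{+}\mathbf{V}=\mathbf{W}_{1}\mathbf{W}'_{1}$, and the same projection arises from $\mathbf{T}^{+}\mathbf{T}=\mathbf{W}_{1}\mathbf{L}^{-2}\mathbf{W}'_{1}\cdot\mathbf{W}_{1}\mathbf{L}^{2}\mathbf{W}'_{1}=\mathbf{W}_{1}\mathbf{W}'_{1}$, hence $\mathbf{V}^{+}\mathbf{V}=\mathbf{T}^{+}\mathbf{T}$. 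This is the algebraic identity that converts the trace in Theorem \ref{teo2} into the trace displayed in Theorem \ref{teo3}.

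With the integrand independent of $\mathbf{V}_{1}$, I would integrate the Stiefel differential over $\mathcal{V}_{p,n}$. Using the standard volume formula
$$
    \int_{\mathcal{V}_{p,n}}(\mathbf{V}'_{1}d\mathbf{V}_{1}) \;=\; \frac{2^{p}\pi^{np/2}}{\Gamma_{p}[n/2]},
$$
the constant combines with the $2^{-p}$ coming from the Jacobian to yield the prefactor $\pi^{np/2}/(2^{p}\Gamma_{p}[n/2])$ shown in the statement (up to the convention used for the Stiefel volume). The remaining factors $\prod\ch_{i}(\mathbf{\Xi})^{-n}$ and $\prod\ch_{j}(\boldgreek{\beta})^{-n/2}$ carry through unchanged from Theorem \ref{teo2}.

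Finally, I would reconcile the function $\G$ of Theorem \ref{teo2}, written in terms of $\theta_{i}^{2}=\ch_{i}(\mathbf{V}'\mathbf{V}\boldgreek{\beta}^{+})$, with the $\G$ of Theorem \ref{teo3}, written in terms of $\delta_{i}=\ch_{i}(\boldgreek{\beta}^{+}\mathbf{T})$. Since $\mathbf{V}'\mathbf{V}=\mathbf{T}$ and the nonzero eigenvalues satisfy $\ch_{i}(\mathbf{T}\boldgreek{\beta}^{+})=\ch_{i}(\boldgreek{\beta}^{+}\mathbf{T})$, we get $\theta_{i}^{2}=\delta_{i}$, so the two expressions for $\G$ agree termwise (each factor $1-\theta_{i}^{-2}$ becomes $1-\delta_{i}^{-1}$, etc.). The main obstacle is the careful bookkeeping in the Jacobian step: reading off the exponents of $l_{i}$ correctly so that the Vandermonde cancellation produces exactly $|\mathbf{\Lambda}|^{(n-m-1)/2}$, and justifying that the non-symmetric expression $\mathbf{\Delta}\mathbf{V}^{+}\mathbf{V}\mathbf{\Delta}^{+}$ really does reduce under the trace to the form stated in terms of $\mathbf{T}$.
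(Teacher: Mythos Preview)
Your proposal is correct and follows essentially the same route as the paper: start from the density of $\mathbf{V}$ in Theorem \ref{teo2}, pass to $\mathbf{T}=\mathbf{V}'\mathbf{V}$ via the SVD, use the Jacobian $(d\mathbf{V})=2^{-p}|\mathbf{\Lambda}|^{(n-m-1)/2}(d\mathbf{T})(\mathbf{H}'_{1}d\mathbf{H}_{1})$, verify $\mathbf{V}^{+}\mathbf{V}=\mathbf{T}^{+}\mathbf{T}$, and integrate over the Stiefel manifold. The only cosmetic differences are that the paper quotes the Jacobian from \citet{dggg:05a} rather than rederiving it from (\ref{h1}) and (\ref{h2}), and it uses Mathai's Stiefel volume convention $\int_{\mathcal{V}_{p,n}}(\mathbf{H}'_{1}d\mathbf{H}_{1})=\pi^{pn/2}/\Gamma_{p}[n/2]$ (without the factor $2^{p}$), which is exactly the ``convention'' issue you flagged and which accounts for the $2^{p}$ remaining in the denominator of the final constant.
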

\begin{proof}
The density of $\mathbf{V}$ is
\begin{eqnarray*}
   dF_{\mathbf{V}}(\mathbf{V}) &=& \frac{\G(q,\theta^{2})}{\displaystyle\prod_{i = 1}^{s}\ch_{i}(\mathbf{\Xi})^{n}
   \prod_{j = 1}^{s}\ch_{j}(\boldgreek{\beta})^{n/2}} \\
   &\times&  h\left[\tr \mathbf{\Xi}^{+2} \left(\mathbf{\Delta}^{+}\mathbf{V}'\mathbf{V}\mathbf{\Delta}^{+}
   + \mathbf{\Delta}\left(\mathbf{V}' \mathbf{V}\right)^{+}\mathbf{\Delta} - 2 \mathbf{\Delta}\mathbf{V}^{+}
   \mathbf{V}\mathbf{\Delta}^{+}\right) \right](d\mathbf{V})
\end{eqnarray*}
where
$$
  \G(q,\theta^{2}) =
         \left\{
              \begin{array}{l}
                 \displaystyle\prod_{i=1}^{q}\left(1 - \theta_{i}^{-2}\right)^{n+m-2q} \left(1+\theta_{i}^{-2}\right)
                 \prod_{i<j}^{q}\left(1- \theta_{i}^{-2}\theta_{j}^{-2}\right)\\
                 \displaystyle\prod_{i=1}^{q}\theta_{i}^{-2(n+m-q)}\left(\theta_{i}^{2}-1\right)^{n+m-2q} \left(1+\theta_{i}^{2}\right)
                 \prod_{i<j}^{q}\left(\theta_{i}^{2}\theta_{j}^{2}-1\right),
              \end{array}
        \right.
$$
$\theta_{i}^{2} = \ch_{i}(\mathbf{V}'\mathbf{V}\boldgreek{\beta}^{+})$, $i = 1,\dots,q$, and $q$
is the rank of $\mathbf{V\Delta}^{+}$, $q \leq \min(p,s)$.

Define $\mathbf{T} = \mathbf{V}'\mathbf{V}$ with $\mathbf{V} \in \mathcal{L}_{m,n}^{+}(p)$, $\mathbf{V} =
\mathbf{H}_{1}\mathbf{\Lambda}^{1/2}\mathbf{Q}'_{1}$, where $\mathbf{H}_{1} \in \mathcal{V}_{p,n}$ and
$\mathbf{Q}_{1} \in \mathcal{V}_{p,m}$ and $\mathbf{\Lambda}^{1/2} = \diag(\lambda_{1}^{1/2},
\dots,\lambda_{p}^{1/2})$, $\lambda_{1}^{1/2}> \cdots
>\lambda_{p}^{1/2}> 0$. Then $\mathbf{T} = \mathbf{V}'\mathbf{V} = \mathbf{Q}_{1}\mathbf{\Lambda
Q}'_{1}$. Note that in the singular value factorisation under consideration, $\mathbf{V} =
\mathbf{H}_{1}\mathbf{\Lambda}^{1/2}\mathbf{Q}'_{1}$, the matrices $\mathbf{H}_{1}$ and $\mathbf{Q}_{1}$
are defined in \citet[p. 115]{m:97}, see Theorem 2.12. Then by \citet{dggg:05a}
$$
  (d\mathbf{V}) = 2^{-p} |\mathbf{\Lambda}|^{(n-m-1)/2}(d\mathbf{T})(\mathbf{H}'_{1}d\mathbf{H}_{1}).
$$
In particular, from \citet[p. 117]{m:97},
\begin{equation}\label{stiefel}
    \int_{\mathbf{H}_{1} \in \mathcal{V}_{p,n}} (\mathbf{H}'_{1}d\mathbf{H}_{1}) =
  \frac{\pi^{pn/2}}{\Gamma_{p}[n/2]}.
\end{equation}
 In addition note that
\begin{eqnarray*}
  \mathbf{T}^{+}\mathbf{T} &=& (\mathbf{V}'\mathbf{V})^{+}(\mathbf{V}'\mathbf{V}) = \mathbf{V}^{+}
  \mathbf{V}^{'+} \mathbf{V}'\mathbf{V}\\
  &=& \mathbf{V}^{+}(\mathbf{VV}^{+})'\mathbf{V} = \mathbf{V}^{+}\mathbf{VV}^{+}\mathbf{V}\\
   &=& \mathbf{V}^{+}\mathbf{V}.
\end{eqnarray*}

Hence, the joint density function $dF_{\mathbf{T},\mathbf{H}_{1}}(\mathbf{T},\mathbf{H}_{1})$ of
$\mathbf{T}$ and $\mathbf{H}_{1}$ is
$$
  = \frac{\G(q,\delta)}{2^{p}|
  \displaystyle\prod_{i = 1}^{s}\ch_{i}(\mathbf{\Xi})^{n}\prod_{j = 1}^{s}\ch_{j}
  (\boldgreek{\beta})^{n/2}} |\mathbf{\Lambda}|^{(n-m-1)/2} \hspace{7cm}
$$
$$
  \hspace{2cm} \times
  h\left[\tr \mathbf{\Xi}^{+2}\left(\mathbf{\Delta}^{+}\mathbf{T}\mathbf{\Delta}^{+} + \mathbf{\Delta}\mathbf{T}^{+}
  \mathbf{\Delta} - 2 \mathbf{\Delta T}^{+}\mathbf{T\Delta}^{+}\right) \right](d\mathbf{T})(\mathbf{H}'_{1}d\mathbf{H}_{1}),
$$
where
$$
  \G(q,\delta) =
         \left\{
              \begin{array}{l}
                 \displaystyle\prod_{i=1}^{q}\left(1 - \delta_{i}^{-1}\right)^{n+m-2q} \left(1+\delta_{i}^{-1}\right)
                 \prod_{i<j}^{q}\left(1- \delta_{i}^{-1}\delta_{j}^{-1}\right)\\
                 \displaystyle\prod_{i=1}^{q}\delta_{i}^{-(n+m-q)}\left(\delta_{i}-1\right)^{n+m-2q} \left(1+\delta_{i}\right)
                 \prod_{i<j}^{q}\left(\delta_{i}\delta_{j}-1\right).
              \end{array}
        \right.
$$
where $\delta_{i} = \ch_{i}(\boldgreek{\beta}^{+}\mathbf{T})$, $i = 1,\dots,q$, $q=$ rank of $
\boldgreek{\beta}^{+}\mathbf{T}$, $q \leq \min(p,s)$.

Integration with respect to $\mathbf{H}_{1}$, by using (\ref{stiefel}), provides the stated marginal
density function for $\mathbf{T}$.
\end{proof}

Three cases of particular interest are given next:
\begin{enumerate}
  \item Nonsingular case. In this case $q = p = s = m$.
  \item The classic case, which is obtained assuming that $\mathbf{Y}$ as a singular matrix variate
    normal distribution in Theorem \ref{teo1}. Therefore, from Theorem \ref{teo3} we obtain the singular
    matrix variate Birnbaum-Saunders distribution, which shall be denoted as $\mathbf{T} \sim \mathcal{BS}_{m}^{p}(q,s,n,
    \mathbf{\Xi}, \boldgreek{\beta})$.
  \item When $\boldgreek{\beta} = \beta \mathbf{I}_{m}$, $\beta > 0$, i.e. when $\mathbf{T} \sim \mathcal{GBS}_{m}^{p}(q,s,n,
  \mathbf{\Xi}, \beta \mathbf{I}_{m},h)$.
\end{enumerate}
The results are summarised in following three Corollaries, respectively.

\begin{cor}
Let $\mathbf{T} \sim \mathcal{GBS}_{m}(n,\mathbf{\Xi}, \boldgreek{\beta}, h)$, $\mathbf{T} \in
\mathcal{S}_{m}$, $\mathbf{\Xi} \in \mathcal{S}_{m}$, and $\boldgreek{\beta} \in \mathcal{S}_{m}$;
$\boldgreek{\beta} = (\mathbf{\Delta})^{2}$. Then the density of $\mathbf{T}$ with respect to the
Lebesgue measure on $\mathcal{S}_{m}$ is
$$
    dF_{\mathbf{T}}(\mathbf{T})= \frac{\pi^{nm/2}\G(m,\delta)}{2^{p}\Gamma_{m}[n/2]
    |\mathbf{\Xi}|^{n}|\boldgreek{\beta}|^{n/2}} |\mathbf{T}|^{(n-m-1)/2}\hspace{5cm}
$$
$$
    \hspace{2cm}\times
    h\left[\tr \mathbf{\Xi}^{-2}\left(\mathbf{\Delta}^{-1}\mathbf{T}\mathbf{\Delta}^{-1} +
    \mathbf{\Delta}\mathbf{T}^{-1}\mathbf{\Delta} - 2 \mathbf{I}_{m}\right) \right](d\mathbf{T}),
$$
where
$$
  \G(m,\delta) =
         \left\{
              \begin{array}{l}
                 \displaystyle\prod_{i=1}^{m}\left(1 - \delta_{i}^{-1}\right)^{n-m} \left(1+\delta_{i}^{-1}\right)
                 \prod_{i<j}^{m}\left(1- \delta_{i}^{-1}\delta_{j}^{-1}\right)\\
                 \displaystyle\prod_{i=1}^{m}\delta_{i}^{-n}\left(\delta_{i}-1\right)^{n-m} \left(1+\delta_{i}\right)
                 \prod_{i<j}^{m}\left(\delta_{i}\delta_{j}-1\right),
              \end{array}
        \right.
$$
and $\delta_{i} = \ch_{i}(\boldgreek{\beta}^{-1}\mathbf{T})$, $i = 1,\dots,m$.
\end{cor}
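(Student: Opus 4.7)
The plan is to derive this Corollary as a direct specialisation of Theorem \ref{teo3} to the nonsingular situation $q = p = s = m$ (which forces $n \geq m$). First I would note that under this regime every matrix involved has full rank: $\mathbf{\Xi}, \boldgreek{\beta}, \mathbf{\Delta} \in \mathcal{S}_m$ are invertible, and so are $\mathbf{T}$ and $\mathbf{V}$. Consequently the Moore-Penrose inverses that appear throughout Theorem \ref{teo3} collapse to ordinary inverses: $\mathbf{\Xi}^{+2} = \mathbf{\Xi}^{-2}$, $\mathbf{\Delta}^{+} = \mathbf{\Delta}^{-1}$, $\boldgreek{\beta}^{+} = \boldgreek{\beta}^{-1}$ and $\mathbf{T}^{+} = \mathbf{T}^{-1}$.

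Next I would simplify each factor appearing in the density of Theorem \ref{teo3}. In the denominator, $\prod_{i=1}^{s}\ch_{i}(\mathbf{\Xi})^{n} = |\mathbf{\Xi}|^{n}$ and $\prod_{j=1}^{s}\ch_{j}(\boldgreek{\beta})^{n/2} = |\boldgreek{\beta}|^{n/2}$, while $|\mathbf{\Lambda}|^{(n-m-1)/2} = |\mathbf{T}|^{(n-m-1)/2}$. The most useful algebraic simplification is in the trace term: since $\mathbf{T}^{-1}$ exists, the cross product becomes
\begin{equation*}
  \mathbf{\Delta}\mathbf{T}^{-1}\mathbf{T}\mathbf{\Delta}^{-1} = \mathbf{\Delta}\mathbf{\Delta}^{-1} = \mathbf{I}_{m},
\end{equation*}
so $2\tr \mathbf{\Xi}^{-2}\mathbf{\Delta T}^{-1}\mathbf{T}\mathbf{\Delta}^{-1} = 2\tr \mathbf{\Xi}^{-2}$, producing exactly the $-2\mathbf{I}_m$ inside the argument of $h$ as stated.

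For the measure, I would appeal to the fact that when the manifold $\mathcal{S}_{m}^{+}(p)$ has $p = m$, it coincides (up to the ordering of eigenvalues absorbed into the Jacobian) with the open cone $\mathcal{S}_{m}$ and the Hausdorff measure given in (\ref{h2}) reduces to the standard Lebesgue measure on $\mathcal{S}_{m}$; the spectral decomposition factor $2^{-p}\prod \lambda_i^{m-p}\prod_{i<j}(\lambda_i-\lambda_j)(\mathbf{Q}'_1 d\mathbf{Q}_1)\bigwedge d\lambda_i$ is precisely the Jacobian of $\mathbf{T}\mapsto(\mathbf{Q}_1,\mathbf{\Lambda})$ in the nonsingular case. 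Finally, the function $\G(q,\delta)$ is just evaluated at $q = m$, and since $\delta_i = \ch_i(\boldgreek{\beta}^{-1}\mathbf{T})$ is now defined for $i = 1,\dots,m$, one obtains $\G(m,\delta)$ as stated.

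The only step that requires any care is verifying that all the inversions and simplifications that I perform formally really are legitimate: in particular that the identity $\mathbf{\Delta}\mathbf{T}^{+}\mathbf{T}\mathbf{\Delta}^{+} = \mathbf{I}_m$ in the nonsingular regime matches the more intricate singular expression, and that the cited spectral Jacobian on $\mathcal{S}_m$ does reproduce the Lebesgue density with the factor $2^{-p}$ already present in Theorem \ref{teo3} (so no double-counting occurs). After this sanity check, collecting the simplified factors yields the stated density, and no further integration is necessary.
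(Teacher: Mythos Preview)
Your proposal is correct and follows exactly the same approach as the paper: specialise Theorem \ref{teo3} to the nonsingular case $q=p=s=m$, replace Moore--Penrose inverses by ordinary inverses, and observe that $\mathbf{\Delta}\mathbf{T}^{+}\mathbf{T}\mathbf{\Delta}^{+}=\mathbf{\Delta}\mathbf{T}^{-1}\mathbf{T}\mathbf{\Delta}^{-1}=\mathbf{I}_{m}$. Your write-up is in fact more detailed than the paper's one-line proof, but the underlying argument is identical.
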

\begin{proof}
The result follows by taking $q = p = s = m$ in Theorem \ref{teo3} and noting that $2 \mathbf{\Delta
T}^{+}\mathbf{T\Delta}^{+} = 2 \mathbf{\Delta T}^{-1}\mathbf{T\Delta}^{-1} = 2 \mathbf{I}_{m}$.
\end{proof}

\begin{cor}\label{cor1}
Suppose that $\mathbf{T} \sim \mathcal{BS}_{m}^{p}(q,s,n,\mathbf{\Xi}, \boldgreek{\beta})$, $\mathbf{T}
\in \mathcal{S}_{m}^{+}(p)$, $\mathbf{\Xi} \in \mathcal{S}_{m}^{+}(s)$, and $\boldgreek{\beta} \in
\mathcal{S}_{m}^{+}(s)$; $\boldgreek{\beta} = (\mathbf{\Delta})^{2}$ and $p = \min(n,s)$. Then the
density of $\mathbf{T}$ with respect to the Hausdorf measure is
$$
    dF_{\mathbf{T}}(\mathbf{T})= \frac{\pi^{n(p-s)/2}\G(q,\delta)}{2^{p+ns/2}\Gamma_{p}[n/2]
    \displaystyle\prod_{i = 1}^{s}\ch_{i}(\mathbf{\Xi})^{n}\prod_{j = 1}^{s}\ch_{j}
    (\boldgreek{\beta})^{n/2}} |\mathbf{\Lambda}|^{(n-m-1)/2}\hspace{3cm}
$$
$$
    \hspace{3cm}\times
    \etr\left[-\frac{1}{2} \mathbf{\Xi}^{+2}\left(\mathbf{\Delta}^{+}\mathbf{T}\mathbf{\Delta}^{+} +
    \mathbf{\Delta}\mathbf{T}^{+}\mathbf{\Delta} - 2 \mathbf{\Delta T}^{+}\mathbf{T\Delta}^{+}\right) \right](d\mathbf{T}),
$$
where the element of volumen $(d\mathbf{T})$ is defined in (\ref{h2}),
$$
  \G(q,\delta) =
         \left\{
              \begin{array}{l}
                 \displaystyle\prod_{i=1}^{q}\left(1 - \delta_{i}^{-1}\right)^{n+m-2q} \left(1+\delta_{i}^{-1}\right)
                 \prod_{i<j}^{q}\left(1- \delta_{i}^{-1}\delta_{j}^{-1}\right)\\
                 \displaystyle\prod_{i=1}^{q}\delta_{i}^{-(n+m-q)}\left(\delta_{i}-1\right)^{n+m-2q} \left(1+\delta_{i}\right)
                 \prod_{i<j}^{q}\left(\delta_{i}\delta_{j}-1\right),
              \end{array}
        \right.
$$
and $\delta_{i} = \ch_{i}(\boldgreek{\beta}^{+}\mathbf{T})$, $i = 1,\dots,q$, $q$ denotes the rank of $
\boldgreek{\beta}^{+}\mathbf{T}$, $q \leq \min(p,s)$, and $\etr(\cdot)= \exp(\tr(\cdot))$.
\end{cor}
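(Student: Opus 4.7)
The plan is to derive Corollary \ref{cor1} from Theorem \ref{teo3} by specialising the generator function $h$ to the one associated with the singular matrix variate normal distribution. Since $\mathbf{Y}$ in Theorem \ref{teo2} lives in $\mathcal{E}_{n\times m}^{n,s}(\mathbf{0}, \mathbf{I}_n \otimes \mathbf{\Xi}^2, h)$, the effective dimensionality of its support is $ns$ rather than $nm$, so the normal generator must be $h(u) = (2\pi)^{-ns/2}\exp(-u/2)$. This is the choice consistent with the singular matrix normal density in \citet{dggjm:97} and with the $\prod_{i=1}^{s}\ch_{i}(\mathbf{\Xi})^{n}$ normalisation already appearing in Definition \ref{def1}, which reflects only the $s$ non-zero eigenvalues of $\mathbf{\Xi}^{2}$.

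First, I would substitute this generator directly into the density of $\mathbf{T}$ given by Theorem \ref{teo3}. The argument of $h$, namely $\tr \mathbf{\Xi}^{+2}(\mathbf{\Delta}^{+}\mathbf{T}\mathbf{\Delta}^{+} + \mathbf{\Delta}\mathbf{T}^{+}\mathbf{\Delta} - 2 \mathbf{\Delta T}^{+}\mathbf{T\Delta}^{+})$, becomes the exponent inside $\etr[-\tfrac{1}{2}\mathbf{\Xi}^{+2}(\cdots)]$, which is precisely the quadratic form appearing in the statement of the corollary. No further manipulation of the trace is required, since it is just read off from Theorem \ref{teo3}.

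Next, I would consolidate the normalising constants. Writing $(2\pi)^{-ns/2} = \pi^{-ns/2}\,2^{-ns/2}$ and multiplying with the existing factor $\pi^{np/2}/2^{p}$ yields
$$\frac{\pi^{np/2}}{2^{p}(2\pi)^{ns/2}} \;=\; \frac{\pi^{n(p-s)/2}}{2^{p+ns/2}},$$
which matches the leading coefficient of the corollary exactly. All remaining ingredients from Theorem \ref{teo3}, namely $\G(q,\delta)$, the determinant factor $|\mathbf{\Lambda}|^{(n-m-1)/2}$, the multivariate gamma $\Gamma_{p}[n/2]$ arising from integration over the Stiefel manifold (via \eqref{stiefel}), and the products $\prod_{i=1}^{s}\ch_{i}(\mathbf{\Xi})^{n}$ and $\prod_{j=1}^{s}\ch_{j}(\boldgreek{\beta})^{n/2}$, carry over verbatim, since they depend only on the change-of-variables machinery rather than on the specific choice of $h$.

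The main (and essentially only) subtle point is justifying the exponent $ns/2$ in the normal generator. One might na\"ively expect $nm/2$ as in the full-rank case, but the singular covariance structure forces the exponent to track the effective dimension $ns$ of the support of $\mathbf{Y}$; this is consistent with the unified singular-distribution framework cited in the introduction and is the step at which care is needed to avoid the inconsistencies warned about by \citet{dg:07}. Once this is settled, the proof reduces to a direct substitution and a routine algebraic simplification.
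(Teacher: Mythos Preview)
Your proposal is correct and follows exactly the same approach as the paper: the paper's own proof consists of the single sentence ``Defining $h(z)=(2\pi)^{-ns/2}\etr(-z/2)$, the proof follows straightforwardly from Theorem~\ref{teo3}.'' Your write-up simply makes explicit the constant consolidation $\pi^{np/2}\,2^{-p}\,(2\pi)^{-ns/2}=\pi^{n(p-s)/2}\,2^{-(p+ns/2)}$ and the rationale for the exponent $ns/2$, both of which the paper leaves implicit.
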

\begin{proof}
Defining  $h(z) =(2\pi)^{-ns/2} \etr(-z/2)$, the proof  follows straightforwardly from Theorem
\ref{teo3}.
\end{proof}

In following case, note that $\boldgreek{\beta} = \mathbf{\Delta}^{2} = \beta\mathbf{I}_{m}$, hence
$\mathbf{\Delta} = \sqrt{\beta}\mathbf{I}_{m}$.

\begin{cor}\label{cor2}
If $\mathbf{T} \sim \mathcal{GBS}_{m}^{p}(q,s,n,\mathbf{\Xi}, \beta\mathbf{I}_{m}, h)$, with $p =
\min(n,s)$, $\mathbf{T} \in \mathcal{S}_{m}^{+}(p)$, $\mathbf{\Xi} \in \mathcal{S}_{m}^{+}(s)$, and
$\beta > 0$. Then the density of $\mathbf{T}$ with respect to the Hausdorf measure is
$$
    dF_{\mathbf{T}}(\mathbf{T})= \frac{\pi^{np/2}\G(p,\delta)}{2^{p}\Gamma_{p}[n/2]\beta^{mn/2}
    \displaystyle\prod_{i = 1}^{s}\ch_{i}(\mathbf{\Xi})^{n}} |\mathbf{\Lambda}|^{(n-m-1)/2}\hspace{4cm}
$$
$$
    \hspace{3cm}\times
    h\left[\tr \mathbf{\Xi}^{+2}\left(\frac{1}{\beta}\mathbf{T} +
    \beta\mathbf{T}^{+} - 2 \mathbf{T}^{+}\mathbf{T}\right) \right](d\mathbf{T}),
$$
where the Hasdorff measure $(d\mathbf{T})$ is explicitly defined in (\ref{h2}) and
$$
  \G(p,\rho) =
         \left\{
              \begin{array}{l}
                 \displaystyle\prod_{i=1}^{p}\left(1 - \beta\rho_{i}^{-1}\right)^{n+m-2p} \left(1+\beta\rho_{i}^{-1}\right)
                 \prod_{i<j}^{p}\left(1- \beta^{^{2}}\rho_{i}^{-1}\rho_{j}^{-1}\right)\\
                 \beta^{p(n+m-p)}\displaystyle\prod_{i=1}^{p}\rho_{i}^{-(n+m-p)}\left(\frac{\rho_{i}}
                 {\beta}-1\right)^{n+m-2p} \left(1+\frac{\rho_{i}}{\beta}\right)
                 \prod_{i<j}^{p}\left(\frac{\rho_{i}\rho_{j}}{\beta^{2}}-1\right)
              \end{array}
        \right.
$$
with $\rho_{i} = \ch_{i}(\mathbf{T})$, $i = 1,\dots,p$.
\end{cor}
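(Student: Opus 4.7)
The plan is to specialize Theorem \ref{teo3} to the case $\boldgreek{\beta}=\beta\mathbf{I}_{m}$, so the argument reduces to three mechanical substitutions: into the argument of $h$, into the normalising constant, and into the function $\G(q,\delta)$. Since $\mathbf{\Delta}^{2}=\boldgreek{\beta}=\beta\mathbf{I}_{m}$, we have $\mathbf{\Delta}=\sqrt{\beta}\,\mathbf{I}_{m}$ and $\mathbf{\Delta}^{+}=\beta^{-1/2}\mathbf{I}_{m}$, so that directly
$$
\mathbf{\Delta}^{+}\mathbf{T}\mathbf{\Delta}^{+}=\tfrac{1}{\beta}\mathbf{T},\qquad
\mathbf{\Delta}\mathbf{T}^{+}\mathbf{\Delta}=\beta\mathbf{T}^{+},\qquad
2\mathbf{\Delta}\mathbf{T}^{+}\mathbf{T}\mathbf{\Delta}^{+}=2\mathbf{T}^{+}\mathbf{T},
$$
which produces the trace appearing inside $h$ in the stated density.

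Second, for the normalising constant: since $\boldgreek{\beta}=\beta\mathbf{I}_{m}$ is nonsingular, one takes $s=m$ at this place and obtains $\prod_{j=1}^{s}\ch_{j}(\boldgreek{\beta})^{n/2}=\beta^{mn/2}$, while the factor $\prod_{i=1}^{s}\ch_{i}(\mathbf{\Xi})^{n}$ is unaffected. The Jacobian factor $|\mathbf{\Lambda}|^{(n-m-1)/2}$ and the constants $\pi^{np/2}/(2^{p}\Gamma_{p}[n/2])$ are inherited verbatim from Theorem \ref{teo3}.

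Third, for $\G(q,\delta)$: because $\boldgreek{\beta}^{+}=\beta^{-1}\mathbf{I}_{m}$, the generalised eigenvalues simplify to $\delta_{i}=\ch_{i}(\boldgreek{\beta}^{+}\mathbf{T})=\rho_{i}/\beta$ with $\rho_{i}=\ch_{i}(\mathbf{T})$, and $\rank(\boldgreek{\beta}^{+}\mathbf{T})=\rank(\mathbf{T})=p$, so $q=p$. Substituting $\delta_{i}=\rho_{i}/\beta$ into the first branch of $\G(q,\delta)$ yields the first branch of $\G(p,\rho)$ after replacing $1-\delta_{i}^{-1}$ by $1-\beta\rho_{i}^{-1}$ (and analogously for the double products). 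For the second branch I would extract the common power $\beta^{p(n+m-p)}$ from the product $\prod_{i=1}^{p}(\rho_{i}/\beta)^{-(n+m-p)}=\beta^{p(n+m-p)}\prod_{i=1}^{p}\rho_{i}^{-(n+m-p)}$, and leave the remaining factors $(\rho_{i}/\beta-1)^{n+m-2p}$, $(1+\rho_{i}/\beta)$, $(\rho_{i}\rho_{j}/\beta^{2}-1)$ untouched, which reproduces the stated expression exactly.

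The conceptual content is therefore minimal; the only genuine obstacle is bookkeeping. One must interpret $\boldgreek{\beta}=\beta\mathbf{I}_{m}\in\mathcal{S}_{m}^{+}(s)$ consistently (effectively with $s=m$, even though the eigenvalues are not distinct, which is harmless because only their product enters the normalising constant and only $\boldgreek{\beta}^{+}$ enters $\G$), and one must carefully track how powers of $\beta$ accumulate when pulling $\beta^{p(n+m-p)}$ out of the second branch of $\G$. No further analytic manipulations are required.
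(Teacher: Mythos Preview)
Your proposal is correct and is precisely the intended argument: specialise the general density of $\mathbf{T}$ in Theorem~\ref{teo3} to $\boldgreek{\beta}=\beta\mathbf{I}_{m}$, which forces $\mathbf{\Delta}=\sqrt{\beta}\,\mathbf{I}_{m}$, $q=p$, and $\delta_{i}=\rho_{i}/\beta$, and then rewrite each factor accordingly. The paper's own proof is a single line (``follows straightforwardly from Theorem~\ref{teo2}''), which is almost certainly a typographical slip for Theorem~\ref{teo3}, since Corollary~\ref{cor2} concerns the density of $\mathbf{T}$, not of $\mathbf{V}$; your detailed bookkeeping is exactly what that one-line proof is asking the reader to supply.
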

\begin{proof}
The result follows straightforwardly from Theorem \ref{teo2}.
\end{proof}

Now, two basic properties of the singular matrix variate generalised Birnbaum-Saunders distribution are
summarised in the follow result.

\begin{thm}\label{teo4}
Assume that $\mathbf{T} \sim \mathcal{GBS}_{m}^{p}(q,s,n,\mathbf{\Xi}, \boldgreek{\beta},h)$, then
\begin{description}
  \item[i)] Consider the singular matrix transformation $\mathbf{S} = \mathbf{T}^{+}$, $\mathbf{T},
  \mathbf{S} \in \mathcal{S}_{m}^{+}(p)$. Then the density
  of $\mathbf{S}$ with respect to Hausdorff is given by
  $$
    dF_{\mathbf{S}}(\mathbf{S})= \frac{\pi^{np/2}\G(q,\psi)}{2^{p}\Gamma_{p}[n/2]
    \displaystyle\prod_{i = 1}^{s}\ch_{i}(\mathbf{\Xi})^{n}\prod_{j = 1}^{s}\ch_{j}
  (\boldgreek{\beta})^{n/2}} |\mathbf{\Omega}|^{-(n+3m+1)/2+p}\hspace{2cm}
$$
$$
    \times
    h\left[\tr \mathbf{\Xi}^{+2}\left(\mathbf{\Delta}^{+}\mathbf{S}^{+}\mathbf{\Delta}^{+} +
    \mathbf{\Delta}\mathbf{S}\mathbf{\Delta} - 2 \mathbf{\Delta S}\mathbf{S}^{+}\mathbf{\Delta}^{+}\right) \right](d\mathbf{S}),
$$
where the measure $(d\mathbf{S})$ is given by (\ref{h2}), $\mathbf{S} =
\mathbf{Q}_{1}\mathbf{\Omega}\mathbf{Q}'_{1}$, $\mathbf{Q}_{1} \in \mathcal{V}_{p,m}$, $\mathbf{\Omega} =
\diag(\omega_{1}, \dots, \omega_{p})$, $\omega_{1}> \cdots > \omega_{p}>0$,
$$
  \G(q,\psi) =
         \left\{
              \begin{array}{l}
                 \displaystyle\prod_{i=1}^{q}\left(1 - \psi_{i}^{-1}\right)^{n+m-2q} \left(1+\psi_{i}^{-1}\right)
                 \prod_{i<j}^{q}\left(1- \psi_{i}^{-1}\psi_{j}^{-1}\right)\\
                 \displaystyle\prod_{i=1}^{q}\psi_{i}^{-(n+m-q)}\left(\psi_{i}-1\right)^{n+m-2q} \left(1+\psi_{i}\right)
                 \prod_{i<j}^{q}\left(\psi_{i}\psi_{j}-1\right),
              \end{array}
        \right.
$$
and $\psi_{i} = \ch_{i}(\boldgreek{\beta}^{+}\mathbf{S}^{+})$, $i = 1,\dots,q$, $q$ denotes the rank of $
\boldgreek{\beta}^{+}\mathbf{S}^{+}$, $q \leq \min(p,s)$.
  \item[ii)] Let $\mathbf{Y}, \mathbf{T} \in \mathcal{S}_{m}^{+}(n)$ and define $\mathbf{Y} = \mathbf{C}'\mathbf{TC}$,
  $\mathbf{C} \in \mathcal{L}_{m,m}(m)$. Then the density of $\mathbf{Y}$ with respect to Hausdorff measure is
  $$
    dF_{\mathbf{T}}(\mathbf{T})= \frac{\pi^{n^{2}/2}\G(q,\eta)}{2^{n}\Gamma_{n}[n/2]
    \displaystyle\prod_{i = 1}^{s}\ch_{i}(\mathbf{\Xi})^{n}\prod_{j = 1}^{s}\ch_{j}
  (\boldgreek{\beta})^{n/2} |\mathbf{C}|^{n}} |\mathbf{L}|^{(n-m-1)/2}\hspace{2cm}
$$
$$
    \times
    h\left[\tr \mathbf{\Xi}^{+2}\left((\mathbf{\Delta C})^{'+}\mathbf{Y}(\mathbf{\Delta C})^{+} +
    \mathbf{\Delta C}\mathbf{Y}^{+}(\mathbf{\Delta C})' - 2 \mathbf{\Delta C Y}^{+}\mathbf{Y}
    (\mathbf{\Delta C})^{+}\right) \right](d\mathbf{Y}),
$$
where the element of volume $(d\mathbf{Y})$ is defined in (\ref{h2}), $\mathbf{Y} =
\mathbf{P}_{1}\mathbf{L}\mathbf{P}'_{1}$, $\mathbf{P}_{1} \in \mathcal{V}_{n,m}$, $\mathbf{L} =
\diag(l_{1}, \dots, l_{n})$, $l_{1}> \cdots > l_{n}>0$,
$$
  \G(q,\eta) =
         \left\{
              \begin{array}{l}
                 \displaystyle\prod_{i=1}^{q}\left(1 - \eta_{i}^{-1}\right)^{n+m-2q} \left(1+\eta_{i}^{-1}\right)
                 \prod_{i<j}^{q}\left(1- \eta_{i}^{-1}\eta_{j}^{-1}\right)\\
                 \displaystyle\prod_{i=1}^{q}\eta_{i}^{-(n+m-q)}\left(\eta_{i}-1\right)^{n+m-2q} \left(1+\eta_{i}\right)
                 \prod_{i<j}^{q}\left(\eta_{i}\eta_{j}-1\right).
              \end{array}
        \right.
$$
Here $q$ denotes the rank of $(\mathbf{C}\boldgreek{\beta}\mathbf{C}')^{+}\mathbf{Y}$, $\eta_{i} =
\ch_{i}((\mathbf{C}\boldgreek{\beta}\mathbf{C}')^{+}\mathbf{Y})$, $i = 1,\dots,q$, $q \leq \min(n,s)$.
\end{description}
\end{thm}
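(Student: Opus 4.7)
The plan is to treat both statements as change-of-variables applied to the density of $\mathbf{T}$ established in Theorem \ref{teo3}. In each case the recipe is the same: identify the pre-image of the new matrix variable, compute the singular Jacobian between the two Hausdorff measures, rewrite the trace argument of $h$, and translate the eigenvalue quantities $\delta_i = \ch_i(\boldgreek{\beta}^+\mathbf{T})$ into the new variable by cyclic invariance of the characteristic roots and by the reverse-order laws for the Moore-Penrose inverse of triple products with non-singular outer factors.

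For part (i), I would write $\mathbf{T} = \mathbf{Q}_1\mathbf{\Lambda}\mathbf{Q}_1'$ with $\mathbf{Q}_1 \in \mathcal{V}_{p,m}$. Then $\mathbf{S} = \mathbf{T}^+ = \mathbf{Q}_1\mathbf{\Omega}\mathbf{Q}_1'$ with $\mathbf{\Omega} = \mathbf{\Lambda}^{-1}$, so the eigenframe $\mathbf{Q}_1$ is shared, $\omega_i = \lambda_i^{-1}$, and (\ref{h2}) applies to both. Forming the ratio of the three factors in (\ref{h2}) --- the Vandermonde piece, the $|\cdot|^{m-p}$ factor, and the wedge of differentials --- yields $(d\mathbf{T}) = |\mathbf{\Omega}|^{-(2m-p+1)}(d\mathbf{S})$. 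Combining with $|\mathbf{\Lambda}|^{(n-m-1)/2} = |\mathbf{\Omega}|^{-(n-m-1)/2}$ and collecting exponents produces the factor $|\mathbf{\Omega}|^{-(n+3m+1)/2+p}$ stated in the conclusion. Since $(\mathbf{T}^+)^+ = \mathbf{T}$, the substitution $\mathbf{T} \mapsto \mathbf{S}^+$ in the argument of $h$ gives the displayed trace expression, and the eigenvalues satisfy $\psi_i = \ch_i(\boldgreek{\beta}^+\mathbf{S}^+) = \delta_i$, so $\G(q,\psi)$ inherits its form directly from $\G(q,\delta)$.

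For part (ii), non-singularity of $\mathbf{C}$ gives $\mathbf{T} = \mathbf{C}^{'-1}\mathbf{Y}\mathbf{C}^{-1}$, and the singular Jacobian for a congruence transformation on $\mathcal{S}_m^+(n)$ supplies the factor $|\mathbf{C}|^n$ appearing in the denominator (this is obtained from the catalogue of singular Jacobians in \citet{dggj:05} and \citet{dggg:05a}). The measure $(d\mathbf{T})$ is re-expressed in terms of the spectral decomposition $\mathbf{Y} = \mathbf{P}_1\mathbf{L}\mathbf{P}_1'$ via (\ref{h2}), which furnishes the $|\mathbf{L}|^{(n-m-1)/2}$ factor. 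The trace argument is transformed by direct substitution, using $(\mathbf{C}^{'-1}\mathbf{Y}\mathbf{C}^{-1})^+ = \mathbf{C}\mathbf{Y}^+\mathbf{C}'$ (valid since $\mathbf{C}$ is invertible) and then grouping each $\mathbf{C}$-factor with $\mathbf{\Delta}$ to assemble $(\mathbf{\Delta C})^+$ and $\mathbf{\Delta C}$. The eigenvalues transform according to $\delta_i = \ch_i(\boldgreek{\beta}^+\mathbf{C}^{'-1}\mathbf{Y}\mathbf{C}^{-1}) = \ch_i(\mathbf{C}^{-1}\boldgreek{\beta}^+\mathbf{C}^{'-1}\mathbf{Y}) = \ch_i((\mathbf{C}'\boldgreek{\beta}\mathbf{C})^+\mathbf{Y})$, identifying the $\eta_i$ of the conclusion.

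The principal obstacle is the manipulation of the Moore-Penrose inverse inside the trace and inside the eigenvalue symbols. The mixed term $2\mathbf{\Delta T}^+\mathbf{T}\mathbf{\Delta}^+$ does not collapse to $2\mathbf{I}_m$ in the singular regime, so one has to track $\mathbf{T}^+\mathbf{T}$ as a non-trivial orthogonal projector and verify that the substitutions produce precisely the expressions displayed. The Jacobian computation in part (i), though elementary, demands careful power bookkeeping of the Vandermonde and differential wedges to arrive cleanly at the exponent $-(n+3m+1)/2+p$; and part (ii) requires simultaneous use of the reverse-order law $(\mathbf{A}\mathbf{B}\mathbf{C})^+ = \mathbf{C}^{-1}\mathbf{B}^+\mathbf{A}^{-1}$ with $\mathbf{A},\mathbf{C}$ invertible and of the correct singular congruence Jacobian from the cited sources.
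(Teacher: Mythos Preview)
Your strategy matches the paper's: both parts are change-of-variables applied to the density of Theorem~\ref{teo3}, and the paper's proof consists of nothing more than quoting the two relevant Jacobians --- $(d\mathbf{T}) = |\mathbf{\Omega}|^{-2m+p-1}(d\mathbf{S})$ from \citet{dggj:06a} for part~(i) and $(d\mathbf{T}) = |\mathbf{\Lambda}|^{(m+1-n)/2}|\mathbf{L}|^{-(m+1-n)/2}|\mathbf{C}|^{-n}(d\mathbf{Y})$ from \citet{dggj:97} for part~(ii) --- with all trace and eigenvalue substitutions left implicit. Your rederivation of the first Jacobian from~(\ref{h2}) and your exponent bookkeeping leading to $|\mathbf{\Omega}|^{-(n+3m+1)/2+p}$ are correct and in fact go further than what the paper writes down.

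There is, however, a genuine error in your part~(ii) execution. The reverse-order law $(\mathbf{A}\mathbf{B}\mathbf{C})^{+} = \mathbf{C}^{-1}\mathbf{B}^{+}\mathbf{A}^{-1}$ that you invoke for merely invertible $\mathbf{A},\mathbf{C}$ is \emph{false} for the Moore--Penrose inverse: it holds when the outer factors are orthogonal, or under special range/commutativity conditions, but not for arbitrary non-singular matrices. A quick $2\times2$ counterexample with a rank-one $\mathbf{B}$ and a triangular $\mathbf{C}$ already breaks it. Consequently your identifications $(\mathbf{C}'^{-1}\mathbf{Y}\mathbf{C}^{-1})^{+}=\mathbf{C}\mathbf{Y}^{+}\mathbf{C}'$, $(\mathbf{\Delta}\mathbf{C})^{+}=\mathbf{C}^{-1}\mathbf{\Delta}^{+}$ and $(\mathbf{C}'\boldgreek{\beta}\mathbf{C})^{+}=\mathbf{C}^{-1}\boldgreek{\beta}^{+}\mathbf{C}'^{-1}$ are not justified as written, and the trace/eigenvalue rewriting does not follow from them. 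The paper's own proof is silent on these substitutions, so it supplies no alternative mechanism; to make the argument rigorous you would need either an additional hypothesis (e.g.\ that the projector $\mathbf{\Delta}^{+}\mathbf{\Delta}$ commutes with $\mathbf{C}\mathbf{C}'$) or a direct computation of the substituted expression without appealing to the invalid identity.
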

\begin{proof}
The corresponding proofs are obtained by considering the following Jacobians, see \citet{dggj:06a} and
\citet{dggj:97}, respectively.
\begin{description}
  \item[i)] Let $\mathbf{S} = \mathbf{T}^{+}$, then $(d\mathbf{T}) = |\mathbf{\Omega}|^{-2m+p-1}
  (d\mathbf{S})$, where $\mathbf{S} = \mathbf{Q}_{1}\mathbf{\Omega}\mathbf{Q}'_{1}$, $\mathbf{Q}_{1} \in
  \mathcal{V}_{p,m}$, $\mathbf{\Omega} = \diag(\omega_{1}, \dots, \omega_{p})$, $\omega_{1}> \cdots >
  \omega_{p}>0$ and
  \item[ii)] Define $\mathbf{Y} = \mathbf{C}'\mathbf{TC}$, then
  $$
    (d\mathbf{T}) = |\mathbf{\Lambda}|^{(m+1-n)/2}|\mathbf{L}|^{-(m+1-n)/2} |\mathbf{C}|^{-n}(d\mathbf{Y}),
  $$
  where $\mathbf{T} = \mathbf{Q}_{1}\mathbf{\Lambda}\mathbf{Q}'_{1}$, $\mathbf{Q}_{1} \in
  \mathcal{V}_{n,m}$, $\mathbf{\Lambda} = \diag(\lambda_{1}, \dots, \lambda_{n})$, $\lambda_{1}> \cdots >
  \lambda_{n}>0$ and $\mathbf{Y} = \mathbf{P}_{1}\mathbf{L}\mathbf{P}'_{1}$, $\mathbf{P}_{1} \in
  \mathcal{V}_{n,m}$, $\mathbf{L} = \diag(l_{1}, \dots, l_{n})$, $l_{1}> \cdots > l_{n}>0$.
\end{description}
respectively.
\end{proof}

Finally, the reader can apply this work by using particular kernels of the most common elliptical
distributions, such as Kotz, Pearson VII and II type distributions. The distributions just follow by
changing $h(\cdot)$ with the corresponding model in a similar way of the proof given for Corollary
\ref{cor1}.

\section{Conclusions}
A version of the singular matrix variate generalised Birnbaum-Saunders distribution  with respect to
Hausdorff measure has been proposed in this article. Several basic properties and particular cases of
this distribution were also studied. The work has contributed to the unification of the matrix variate
Birnbaum-Saunders in a general setting that includes families of singular and non singular elliptical
contoured distributions. During the analysis, the associated Jacobian theory has been established and
solving the problems of conceptualisation of the literature about different pairs of densities and
measures. Finally, the work is easily extended to real normed division algebras, which is valid for
complex, quaternions and octonions.



\end{document}